\theoremstyle{plain}
\newtheorem{thm}{Theorem}[section]
\newtheorem{prop}[thm]{Proposition}
\newtheorem{cor}[thm]{Corollary}
\theoremstyle{definition}
\newtheorem{defn}[thm]{Definition}
\theoremstyle{remark}
\newtheorem{rem}[thm]{Remark}
\newenvironment{propprime}[1]
{
	\addtocounter{thm}{-1}
	\begin{prop}}
	{\end{prop}}
\newcommand{\m}{\mathfrak{m}}
\newcommand{\p}{\partial}
\newcommand{\scal}{\mathrm{scal}}
\newcommand{\ric}{\mathrm{Ric}}
\newcommand{\trace}{\mathrm{tr}}
\newcommand{\dv}{\, dV}
\def\hg{\widehat{g}}
\def\hnab{\widehat{\nabla}}
\def\frakr{\mathfrak R}
\newcounter{mnotecount}[section]
\numberwithin{equation}{section}
\newtheorem{introthm}{\bf Theorem}
\title[]{V-static metrics and the volume-renormalised mass} 
\author[S. McCormick]{Stephen McCormick}
\address{Institutionen f\"or teknikvetenskap och matematik \\
	Lule{\aa} tekniska universitet \\
	971 87 Lule\aa \\
	Sweden} 
\email{stephen.mccormick@ltu.se}
\begin{document}

\begin{abstract}
	
	V-static metrics generalise the notion of static metrics, and stem from the work of Miao and Tam \cite{MiaoTam09}, and Corvino, Eichmair and Miao \cite{CEM} on critical points of the volume functional over the space of compact manifolds with constant scalar curvature. In this article we show that these V-static metrics arise naturally in the context of asymptotically hyperbolic manifolds as critical points of the volume-renormalised mass, recently introduced by Dahl, Kr\"oncke and the author \cite{DKM}. 
	
	In particular, we show that critical points of the volume-renormalised mass over the space of constant scalar curvature asymptotically hyperbolic manifolds without boundary, or satisfying appropriate boundary conditions, are exactly V-static metrics. This is directly analogous to the relationship between critical points of the ADM mass and static metrics for asymptotically flat manifolds.

\end{abstract}
	
\maketitle

\section{Introduction}

In recent work, Dahl and Kr\"oncke and the author introduced a new geometric quantity defined for asymptotically hyperbolic manifolds, which we call the \textit{volume-renormalised mass} \cite{DKM}. It is essentially a linear combination of the renormalised volume and a surface integral at infinity closely resembling the ADM mass for asymptotically flat manifolds (see Definition \ref{defn-VR} below for the precise definition).

Among other results, in \cite{DKM} we showed that on the set of complete constant scalar curvature asymptotically hyperbolic metrics without boundary, critical points of the volume-renormalised mass correspond exactly to Einstein metrics. In this article, critical points and local extremisers of the volume-renormalised mass are further explored, and we characterise them more generally as \textit{V-static metrics} (see Definition \ref{defn-Vstat}, below). These V-static metrics should be viewed as a generalisation of static metrics in the context of initial data for general relativity.

It should be remarked that the class of asymptotically hyperbolic manifolds we consider here impose slower decay conditions than what is usually considered in the context of mathematical general relativity, as the volume-renormalised mass is defined for metrics with slower decay than the standard asymptotically hyperbolic mass \cite{ChruscielHerzlich2003,Wang2001mass}. In particular, under our decay assumptions, the standard asymptotically hyperbolic mass is not well-defined. Roughly speaking, we are interested in metrics that are asymptotic to a fixed conformally compact asymptotically hyperbolic Einstein manifold at a rate of $o(\rho^{(n-1)/2}$), where $\rho$ is the boundary defining function and $n$ is the dimension of the manifold. See Section \ref{S-Setup} for the precise definitions.

These V-static metrics that we are interested in stem from the work of Miao and Tam \cite{MiaoTam09}, and Corvino, Eichmair and Miao \cite{CEM} as critical points of the volume functional over the space of compact manifolds with constant scalar curvature. We give a definition now.

\begin{defn}\label{defn-Vstat}
A V-static metric is Riemannian metric $g$ admitting a non-trivial solution $(f,\lambda)$ to
\begin{equation}\label{eq-V-static0}
	D\scal^*_g(f)=\lambda g
\end{equation}
where $\lambda\in\mathbb R$ and the adjoint of the linearised scalar curvature operator is given by
\begin{equation}
	D\scal^*_g(f)=-\Delta_g(f)g+\nabla^2_g(f)-f\ric_g.
\end{equation}
We call such an $f$ the static potential for $g$.

In the case of asymptotically hyperbolic manifolds, which we consider here, we will further ask that a V-static potential be bounded. Analogous to static metrics on asymptotically flat manifolds, this then implies that $f$ must be asymptotic to a constant, which depends on $\lambda$ (see Corollary \ref{cor-Vstatasymp}, below).
\end{defn} 
Clearly rescaling a solution to \eqref{eq-V-static0} by a constant results in another solution to \eqref{eq-V-static0} with $\lambda$ rescaled by the same constant. For this reason we can assume without loss of generality that bounded V-static potentials are asymptotic to $1$ at infinity, which corresponds to fixing $\lambda=n-1$. Note that V-static metrics generalise the notion of static metrics, which are solutions with $\lambda=0$ (and asymptotically hyperbolic manifolds cannot have the static potential $f$ asymptotic to a constant in this case). For complete asymptotically hyperbolic manifolds without boundary, the metric is V-static if and only if it is Einstein (Proposition \ref{prop-V-Einst}, below). In this case the equivalence of V-static metrics with critical points of the volume-renormalised mass on the space of constant scalar curvature metrics simply recovers the result of Dahl, Kr\"oncke and the author \cite{DKM} equating critical points with Einstein metrics. For comparison, recall that the critical points of the ADM mass on the space of scalar flat, complete asymptotically flat manifolds without boundary are exactly Ricci flat metrics. However, it is now well-known that minimisation of the ADM mass is closely related to boundary static metrics, which in the case of a complete asymptotically flat manifold without boundary coincide with Ricci flat metrics.

We now state a simplified version of the main results of this article. For more precise statements of Theorems \ref{thm-intro1} and \ref{thm-intro2}, the reader is directed to Theorems \ref{thm-main1} and \ref{thm-main2}, respectively.
\begin{introthm}[Theorem \ref{thm-main1}]\label{thm-intro1}
	Let $(M,g)$ be an asymptotically hyperbolic manifold without boundary. The following three statements are equivalent:
	\begin{enumerate}[label=\Roman*)]
		\item $(M,g)$ is a local extrema of the volume-renormalised mass on the space of constant scalar curvature metrics,
		\item $(M,g)$ is a V-static,
		\item $(M,g)$ is Einstein.
	\end{enumerate}
\end{introthm}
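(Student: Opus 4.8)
The plan is to prove the three-way equivalence by treating (II) $\Leftrightarrow$ (III) as already known — it is precisely Proposition \ref{prop-V-Einst} — and concentrating on the variational characterisation linking (I) to these. The conceptual core is a first-variation computation showing that a constant scalar curvature metric is a critical point of $\mathfrak{m}_{VR}$ among constant scalar curvature metrics if and only if it is V-static. Combining this with Proposition \ref{prop-V-Einst} then gives (critical point) $\Leftrightarrow$ (II) $\Leftrightarrow$ (III), and it remains only to match ``critical point'' with the local-extremum condition in (I).

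For the first-variation step I would proceed in the style of Corvino--Schoen and Miao--Tam. Writing $\mathfrak{m}_{VR}$ as the relevant linear combination of the renormalised volume and the mass surface integral at infinity, I would compute $D\mathfrak{m}_{VR}|_g(h)$ for a metric variation $h$ with the prescribed decay and integrate by parts using the defining adjoint identity, namely that $f\,D\scal_g(h)-\langle D\scal^*_g(f),h\rangle$ is a divergence. The surface integral is built precisely so that its variation absorbs the boundary terms produced at infinity by this integration by parts when $f$ is taken asymptotic to $1$, while the renormalised volume contributes only a pure-trace term proportional to $g$. Restricting to variations tangent to the constraint, $h\in\kernel D\scal_g$, the interior pairing $\int_M f\,D\scal_g(h)\,dV$ vanishes, and a Lagrange-multiplier argument produces a function $f$ with
\[
D\scal^*_g(f)=\lambda g,
\]
where $\lambda$ is the constant arising from the renormalised-volume gradient. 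Taking the trace of this equation shows that $f$ satisfies a linear elliptic equation, so elliptic regularity together with the asymptotically hyperbolic structure (Corollary \ref{cor-Vstatasymp}) guarantees that $f$ is bounded and asymptotic to a constant, hence a genuine V-static potential. The converse implication, that a V-static metric is critical, follows by running the same identity in reverse, since on constraint-tangent variations all terms except the boundary contributions vanish.

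To close the cycle I would then deal with the local-extremum condition. The implication (I) $\Rightarrow$ (II) is immediate, as a local extremum is in particular a critical point and hence, by the above, V-static. For the reverse implication (III) $\Rightarrow$ (I) I would invoke a second-variation analysis at an Einstein metric: one computes the Hessian of $\mathfrak{m}_{VR}$ restricted to constant scalar curvature metrics and shows it is sign-definite modulo the gauge directions, which is essentially the rigidity underlying the identification of critical points with Einstein metrics in \cite{DKM}. This promotes Einstein metrics from critical points to genuine local extrema and completes the equivalence.

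The main obstacle I anticipate is analytic rather than formal, and lies entirely in the first-variation step under the weak decay hypotheses emphasised in the introduction, which are slower than the standard asymptotically hyperbolic mass decay. One must justify that $D\mathfrak{m}_{VR}|_g(h)$ is well defined, that the boundary integrals appearing in the integration by parts against $D\scal^*_g$ converge and transform as claimed, and that the Lagrange-multiplier function $f$ genuinely exists — which requires a closed-range statement for the linearised constraint operator $D\scal_g$ in the weighted spaces adapted to this slow decay. Establishing that $f$ is honestly bounded within these spaces, rather than merely a formal solution, is the delicate point on which the whole characterisation ultimately rests.
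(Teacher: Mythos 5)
Your core argument---first variation of $\m_{VR}$ via the divergence identity for $f\,D\scal_g(h)-\langle D\scal^*_g(f),h\rangle$, a Lagrange-multiplier argument resting on surjectivity of the linearised constraint in the slow-decay weighted spaces, decay of the multiplier via Corollary \ref{cor-Vstatasymp}, and Proposition \ref{prop-V-Einst} for (II)$\iff$(III)---is exactly the route the paper takes (Theorems \ref{thm-regham0}, \ref{thm-DH1} and \ref{thm-main1}). There are, however, two gaps.

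The first is your closing step: promoting Einstein metrics from critical points to local extrema by asserting that the Hessian of $\m_{VR}$ restricted to $\mathcal C_o$ is ``sign-definite modulo gauge''. This is neither carried out nor true without further hypotheses. The precise statement the paper proves (Theorem \ref{thm-main1}) replaces ``local extremum'' in (I) by the first-order condition $D_g\m_{VR}[h]=0$ for all $h\in T_g\mathcal C_o$; no second-variation analysis appears anywhere in the paper, and sign-definiteness of the second variation at an Einstein metric is a linear stability condition that can fail. Either adopt the paper's weaker reading of (I), or supply a stability hypothesis; as written, your (III)$\implies$(I) in the strong ``local extremum'' sense is unproved.

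The second is smaller but real: Corollary \ref{cor-Vstatasymp} only gives a dichotomy---the potential is either asymptotically constant or grows linearly on a cone---so it does not by itself ``guarantee that $f$ is bounded and asymptotic to a constant''. The paper rules out the linear-growth branch by a separate argument: the Lagrange multiplier is a bounded linear functional on $C^{0,\alpha}_{\tau}(\Lambda^3T^*M)$, and pairing it (via monotone convergence) with nonnegative test functions equal to $|x|^{-\tau}$ near infinity, with $\tau<n$, would produce a divergent integral if the multiplier grew linearly. You correctly flag boundedness as the delicate point, but this step must actually be supplied before the dichotomy yields the conclusion you want.
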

The above result in fact is straightforward to establish from \cite{DKM} and the work of Corvino, Eichmair and Miao \cite{CEM} since the only V-static potential in this case is the constant function $1$. However, we prove it in such a way that emphasises the role of the V-static potential and readily generalises to the case of a manifold with boundary (Theorem \ref{thm-intro2}). In that case, when the manifold has boundary, V-static metrics are distinct from Einstein metrics. Natural boundary conditions for this problem are to fix the Bartnik boundary data, $(\p M,g_{\p M},H)$, where $g_{\p M}$ is the induced metric and $H$ is mean curvature of the boundary. We show the following.
\begin{introthm}[Theorem \ref{thm-main2}]\label{thm-intro2}
	Let $(M,g)$ be an asymptotically hyperbolic manifold with boundary. The following two statements are equivalent:
\begin{enumerate}[label=\Roman*)]
	\item $(M,g)$ is a local extrema of the volume-renormalised mass on the space of constant scalar curvature metrics with fixed Bartnik data,
	\item $(M,g)$ is a V-static.
\end{enumerate}
\end{introthm}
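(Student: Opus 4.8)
The plan is to characterize the critical points of the volume-renormalised mass by computing its first variation on the constrained space of constant scalar curvature metrics with fixed Bartnik data, and showing that the Euler-Lagrange equation is exactly the V-static equation. The strategy follows the analogy with the asymptotically flat case, where the ADM mass variation produces the static equation via the adjoint $D\scal^*_g$; here the extra volume term in the volume-renormalised mass is precisely what upgrades the static equation ($\lambda=0$) to the V-static equation ($\lambda g$ on the right-hand side). Let me sketch the steps.

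First I would compute the first variation of the volume-renormalised mass along a path of metrics $g_t = g + th$, without yet imposing any constraint. Using the results from \cite{DKM}, this should decompose into a bulk integral involving the linearised scalar curvature $D\scal_g(h)$ paired against some potential, a bulk integral coming from the renormalised volume (contributing a term linear in $\trace_g h$), and boundary terms at $\p M$ and at infinity. The boundary term at infinity should vanish or be controlled by the decay assumptions (the $o(\rho^{(n-1)/2})$ condition is designed exactly so that these boundary integrals at infinity behave well), while the boundary term at $\p M$ is where the Bartnik data enters. Next I would integrate by parts to move derivatives off $h$ and onto the potential function $f$, turning the pairing $\langle \cdot, D\scal_g(h)\rangle$ into $\langle D\scal^*_g(f), h\rangle$ plus boundary contributions; this is the step where the operator $D\scal^*_g(f)=-\Delta_g(f)g+\nabla^2_g(f)-f\ric_g$ appears naturally.

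The key is then to handle the constraints. Fixing constant scalar curvature means restricting to variations $h$ with $D\scal_g(h)=\text{const}$ (or, after a Lagrange-multiplier reduction, $h$ in the kernel modulo a trace adjustment); fixing the Bartnik data $(\p M, g_{\p M}, H)$ restricts the boundary behavior of $h$. The standard device is to introduce a Lagrange multiplier $\lambda$ for the scalar curvature constraint — and this multiplier is exactly the $\lambda$ appearing on the right-hand side of \eqref{eq-V-static0}. Stationarity of the volume-renormalised mass subject to these constraints then forces $D\scal^*_g(f) = \lambda g$ in the interior, with $f$ the static potential (bounded, asymptotic to a constant as promised by Corollary \ref{cor-Vstatasymp}), and simultaneously forces the boundary terms to vanish precisely when the Bartnik data is held fixed. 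Conversely, given a V-static metric, reversing the computation shows the constrained first variation vanishes, establishing both directions of the equivalence.

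The main obstacle I expect is the careful analysis at the two boundaries. At infinity, one must verify that all the boundary integrals arising from the integration by parts genuinely vanish under the weak decay conditions — the renormalised mass is delicate precisely because the naive mass integral diverges, and one must confirm the variation is finite and that no anomalous boundary-at-infinity term survives to alter the Euler-Lagrange equation. At $\p M$, the delicate point is matching the boundary terms generated by integration by parts against the correct interpretation of "fixing Bartnik data": variations preserving $g_{\p M}$ and $H$ must be shown to kill exactly the boundary pairing, so that no spurious boundary condition on $f$ is imposed beyond what the problem dictates. Establishing that the constrained variational problem is well-posed (that the constraint manifold is a genuine Banach submanifold, so that the Lagrange multiplier theorem applies and the multiplier $\lambda$ is a genuine constant) is the technical heart of the argument, and is where the interplay between the analytic function-space setup of \cite{DKM} and the geometric boundary conditions must be made rigorous.
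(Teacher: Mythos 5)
Your overall strategy --- extremising a Lagrange function of the form $\m_{VR,\hg}(g)-\int_M f\,\frakr(g)$ over the constraint set and reading the V-static equation off the Euler--Lagrange equations --- is the same as the paper's, and you correctly identify that the renormalised-volume term is what produces the $(n-1)g$ on the right-hand side and that the boundary pairing must be matched against the fixed Bartnik data. But there is a conceptual slip: you assert that the Lagrange multiplier for the scalar curvature constraint ``is exactly the $\lambda$ appearing on the right-hand side of \eqref{eq-V-static0}.'' It is not. The multiplier dual to the constraint $\frakr(g)=0$ is a distribution on $M$ (an element of $(C^{0,\alpha}_{\tau}(\Lambda^3T^*M))^*$), and it is this function, added to the reference function $f$ in the Lagrange functional, that becomes the V-static potential $V=f+\lambda$; the constant $n-1$ in $D_g\frakr^*[V]=(n-1)g\,dV_g$ comes from the coefficient of the renormalised volume in $\m_{VR}$, exactly as you say earlier in your own sketch. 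These two statements in your proposal contradict each other, and the second would derail the computation if followed literally.

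More seriously, the steps you defer to ``the technical heart'' are precisely the ones where the existing literature does not apply off the shelf, and you give no argument for any of them. (i) Surjectivity of the linearised boundary-augmented constraint map $h\mapsto(D\scal_g(h),h_{|\p M},DH_g(h))$ is proved by Huang--Jang only for $\tau\in(\frac n2,n)$; at the decay rates $\tau\in(\frac{n-1}2,n)$ needed for the volume-renormalised mass their coercivity estimate for $D\scal_g^*$ must be re-derived with a different choice of parameter (the paper takes $\beta=\frac a2-1$ in place of $\beta=\frac a2$ and checks the sign of a cubic in $a$). Without this, the Lagrange multiplier theorem cannot be invoked at all. (ii) The multiplier obtained from the abstract theorem is only a distribution; one must bootstrap it to a classical solution by elliptic regularity, rule out linear growth on a cone (done in the paper by pairing against test functions asymptotic to $|x|^{-\tau}$ and using boundedness of the functional), and then use the isomorphism of $\Delta_g-n$ with Dirichlet boundary conditions on weighted spaces to upgrade the decay to $\lambda\in C^{2,\alpha}_{\tau}$. (iii) For the converse, the vanishing of the boundary term for variations fixing the Bartnik data is not automatic: one needs the explicit formula for the linearised mean curvature to rewrite the flux integrand as a combination of $D_gH[h]$, the tangential components $h_{AB}$, and a tangential divergence, each of which is killed by the boundary conditions. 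None of these is routine, and each is where the proof actually lives.
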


The general procedure we use to prove both Theorem \ref{thm-intro1} and \ref{thm-intro2} is a now somewhat standard Lagrange multipliers argument \cite{anderson2019embeddings,HJ22,HJM,HL19,mccormick2014phase,mccormick2021hilbert}, originally due to Bartnik in the study of critical points of the ADM mass \cite{Bartnik05}. The main technical aspect in carrying out this type of argument lies in proving that the linearised scalar curvature operator (augmented with a boundary map in the case of Theorem \ref{thm-intro2}) is surjective. Conveniently, this has already been addressed in the case of a manifold without boundary by Huang, Jang and Martin \cite{HJM} (and \cite{DKM} for the precise asymptotics we use). Furthermore, the inclusion of a boundary satisfying the boundary conditions we consider here has been recently established by Huang and Jang \cite{HJ22}, although they impose decay conditions that are too strong for studying the volume-renormalised mass. Thankfully, their work only requires minor modifications to be extended to the natural decay rates for the volume-renormalised mass so we do not need to repeat the full analysis here. Instead we can directly use the results to apply the Lagrange multipliers theorem.

The structure of this article is as follows. In Section \ref{S-Setup}, we recall some basic definitions and set up the function spaces used throughout. Section \ref{S-props} then motivates the relationship between V-static metrics and the volume-renormalised mass with some elementary results that follow directly from the study of V-static metrics on compact manifolds. In Section \ref{S-noBoundary}, we prove Theorem \ref{thm-main1} (Theorem \ref{thm-intro1}) and outline the Lagrange multipliers argument from which the main theorems follow. Finally, in Section \ref{S-boundarycase}, the proof of Theorem \ref{thm-main2} (Theorem \ref{thm-intro2}) is given.
\section{Preliminaries}\label{S-Setup}
Throughout this article we will use $n\geq3$ to denote the dimension of the asymptotically hyperbolic manifolds on which we work. When we speak of mean curvature, we take it to be the trace of the second fundamental form with respect to the normal $\nu$ pointing towards the asymptotic end of our manifold. We take the Laplacian to be the trace of the Hessian, which we remark differs from \cite{DKM} wherein we defined the volume-renormalised mass, but appears more common in the literature on static metrics.

We begin by defining a suitable notion of reference manifold.
\begin{defn}
	Let $(N,h)$ be a closed $(n-1)$-dimensional manifold and $k\in\{-1,0,1\}$ a constant. An asymptotically hyperbolic reference manifold will be taken to be a manifold $\widehat M=(r_k,\infty)\times N$ equipped with a metric $\hg=\frac{1}{r^2+k}dr^2+r^2h$ that is conformally compact with all sectional curvature asymptotic to $-1$ at infinity, where $r_k=0$ if $k=0$ or $k=1$ and $r_k=1$ if $k=-1$. Furthermore, we will ask that a reference manifold be asymptotically Poincar\'e Einstein (APE) in the sense of Definition \ref{defn-APE} (below).
\end{defn}
This definition follows that of \cite{HJ22} as we rely heavily on their analysis. However, a major point of difference between this and the work there -- and indeed the majority of the work related to asymptotically hyperbolic manifolds -- is the rate at which $g$ decays to $\hg$. It is standard to ask that $(g-\hg)=O(r^{-\tau})$ for some $\tau>\frac{n}{2}$, as this is required for the usual definition of the mass of an asymptotically hyperbolic manifold to be well-defined. However, the volume-renormalised mass only requires $\tau>\frac{n-1}{2}$ to be well-defined (and a weaker integrability condition for the scalar curvature) \cite{DKM}. In order to make the decay rates precise, we make use of weighted H\"older spaces $C^{k,\alpha}_{\delta}=r^{-\delta}C^{k,\alpha}$, equipped with the standard norm
\begin{equation*}
	\|u\|_{k,\alpha,\delta}=\|r^\delta u\|_{C^{k,\alpha}}.
\end{equation*}
This follows the convention that a function in $C^{k,\alpha}_\delta$ is $O(r^{-\delta})$. Weighted H\"older spaces of sections of bundles are defined analogously (see Lee \cite{Lee06}, for example). Throughout this article we fix some $\tau\in(\frac{n-1}{2},n)$, which will serve as the rate of decay of a metric $g$ towards a fixed reference metric. The lower bound suffices to ensure that the volume-renormalised mass is well-defined, and the upper bound is a requirement to ensure that $(\Delta-n)$ is an isomorphism from $C^{k,\alpha}_\tau$ to $C^{k-2,\alpha}_\tau$ \cite{lee1995spectrum,Lee06}.
\begin{defn}\label{defn-APE}
	We will say a reference manifold $(\widehat M,\hg)$ is \textit{asymptotically Poincar\'e--Einstein (APE)} if $|\ric_{\hg}+(n-1)\hg|_{\hg}\in C^{k-2,\alpha}_\tau$.
\end{defn}
It is required that the reference manifold be APE in order to ensure that the volume-renormalised mass with respect to that manifold is well-defined under the appropriate scalar curvature integrability condition. We will therefore impose this additionally throughout, on the fixed $(\widehat M,\hg)$.
\begin{defn}
A smooth connected Riemannian manifold $(M,g)$ is said to be asymptotic to the reference manifold $(\widehat M,\hg)$ if there exist compact sets $K\subset M$ and $\widehat K\subset \widehat M$ and a diffeomorphism $\varphi:\widehat M\setminus \widehat K\to M\setminus K$, such that
\begin{equation*}
	\varphi^* g - \hg \in C^{2,\alpha}_{\tau}(S^2T^*(\widehat M\setminus \widehat K)),
\end{equation*}
where $S^2T^*(\widehat M\setminus \widehat K)$ is the bundle of symmetric bilinear forms on $\widehat M\setminus \widehat K$
\end{defn}
Throughout this article we will slightly abuse notation while we work on by omitting reference to this diffeomorphism when working with $g$ and $\hg$ on the asymptotic end.

The space of asymptotically hyperbolic Riemannian metrics on $M$ that we consider throughout will be denoted by
\begin{equation}\label{eq-Rdefn}
	\mathcal R^{k,\alpha}_{\tau}=\{g | g-\hg_0 \in C^{2,\alpha}_{\tau}(S^2T^*M),g>0 \}
\end{equation}
where $\hg_0$ denotes some extension of $\hg$ to the whole manifold $M$.

\begin{defn}[\cite{DKM}]\label{defn-VR}
	The volume-renormalised mass of a Riemannian manifold $(M,g)$ that is asymptotic to a reference manifold $(\widehat M, \hg)$ is defined as
	\begin{align}\begin{split}\label{eq-VRmassdefn}
		\m_{VR,\hg}(g)=&\lim_{R\to\infty}\left( \int_{S_R}\left(\hnab^i(g_{ij})-\hnab_j(\hg^{ik}g_{ik})\right)\nu^j\,dS_{\hg}\right.\\
		&\left.+ 2(n-1)\left(\int_{B_R}dV_{g}-\int_{\widehat B_R}dV_{\hg}  \right)\right),
	\end{split}
\end{align}
where $S_R$ is a sphere of radius $R$ in $M\setminus K \cong \widehat M\setminus \widehat K$, and $B_R$ and $\widehat B_R$ are the regions bounded by $S_R$ in $M$ and $\widehat M$ respectively.
\end{defn}
It was shown in \cite{DKM} (Theorem 3.1 therein) that $\m_{VR,\hg}(g)$ is well-defined and finite for $g\in\mathcal R^{2,\alpha}_{\tau}$ provided that $\scal_g+n(n-1)\in L^1(M)$. Assuming the manifolds are conformally compact and a mild assumption on the conformal boundary, the volume-renormalised mass was also shown to be independent of the diffeomorphism used to define it (Theorem 3.18 of \cite{DKM}).

As mentioned in the Introduction, the main results here rely on a Lagrange multipliers argument. The Lagrange multiplier Theorem for Banach manifolds is a classical textbook result, however we state explicitly the version we use here for reference. A proof of which can be found in Appendix D of \cite{HL19}, for example.
\begin{thm}[Lagrange multipliers for Banach spaces]\label{thm-lagrange}
	Let $X$ and $Y$ be Banach spaces and $\frakr:X\to Y$ a $C^1$ map. Assume that for each $x\in \frakr^{-1}(0)$, the linearisation $D_x\frakr:X\to Y$ is surjective. Then if $D_x\mathcal H[v]=0$ for all $v\in \ker(D_x\frakr)$ for some $x\in \frakr^{-1}(0)$ and $C^1$ functional $\mathcal H:X\to \mathbb R$, there exists $\lambda\in Y^*$ such that $D_x\mathcal H[v]=\lambda(D_x\frakr[v])$ for all $v\in X$.
\end{thm}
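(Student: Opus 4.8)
The plan is to reduce the statement to the purely linear fact that a continuous linear functional which annihilates the kernel of a surjective bounded operator factors through that operator. Write $T := D_x\frakr : X \to Y$ and $L := D_x\mathcal H : X \to \mathbb R$; both are bounded linear maps because $\frakr$ and $\mathcal H$ are $C^1$, so in particular $L \in X^*$. Since the chosen point satisfies $x \in \frakr^{-1}(0)$, the standing hypothesis makes $T$ surjective, and the hypothesis on $\mathcal H$ says precisely that $L$ vanishes on $K := \ker T$. The goal is then to produce $\lambda \in Y^*$ with $L = \lambda \circ T$, i.e.\ a factorisation of $L$ through $T$, so the entire content is to invert $T$ in a suitable quotient sense and compose.

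First I would pass to the quotient. Since $T$ is continuous, $K$ is a closed subspace of the Banach space $X$, so $X/K$ is again a Banach space, and I write $\pi : X \to X/K$ for the quotient projection. The operator $T$ descends to a bounded linear bijection $\bar T : X/K \to Y$: it is injective by construction of $K$ and surjective because $T$ is. Likewise, because $L$ vanishes on $K$, it descends to a bounded linear functional $\bar L : X/K \to \mathbb R$ characterised by $\bar L \circ \pi = L$.

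The key step, and the only place where completeness is genuinely used, is to upgrade $\bar T$ from a bounded bijection to a topological isomorphism. Here I would invoke the open mapping theorem (equivalently, the bounded inverse theorem): a continuous linear bijection between Banach spaces has continuous inverse, so $\bar T^{-1} : Y \to X/K$ is bounded. This is the main obstacle in the sense that it is the sole nontrivial analytic ingredient; without completeness of $X$ and $Y$ the conclusion can fail, and it is exactly the surjectivity assumption on $D_x\frakr$ that makes $\bar T$ a bijection and hence eligible for the open mapping theorem.

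With $\bar T^{-1}$ in hand I would simply set $\lambda := \bar L \circ \bar T^{-1} : Y \to \mathbb R$, which lies in $Y^*$ as a composition of bounded linear maps. To verify the claimed identity, for arbitrary $v \in X$ I would compute
\[
  \lambda(T v) = \bar L\big(\bar T^{-1}(T v)\big) = \bar L\big(\bar T^{-1}(\bar T\,\pi(v))\big) = \bar L(\pi(v)) = L(v),
\]
using $T = \bar T \circ \pi$ and $\bar T^{-1}\circ \bar T = \identity$ on $X/K$. Rewriting $T = D_x\frakr$ and $L = D_x\mathcal H$ then gives $D_x\mathcal H[v] = \lambda(D_x\frakr[v])$ for all $v \in X$, as required.
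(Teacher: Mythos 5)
Your argument is correct and complete: since the hypothesis is stated directly as $D_x\mathcal H$ vanishing on $\ker(D_x\frakr)$, the theorem reduces to the purely linear factorisation fact, and your quotient-plus-open-mapping-theorem proof is exactly the standard one. The paper itself does not prove this statement but defers to Appendix D of \cite{HL19}, where the argument is essentially the same (surjectivity plus the open mapping theorem yielding a bounded right inverse modulo the kernel), so your proposal matches the intended proof.
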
 

\vspace{5mm} %ADJUST

\section{Properties of V-static metrics}\label{S-props}
This section contains essentially a reinterpretation of some results due to Corvino, Eichmair and Miao \cite{CEM}, which although were obtained in the case of compact manifolds, in fact shed some light on the connection between V-static metrics and the volume-renormalised mass.

The following proposition, proven in \cite{CEM}, follows the same argument as the static case \cite{corvino2000scalar}, and importantly, is a local argument so applies also to asymptotically hyperbolic manifolds.
\begin{prop}[Proposition 2.1 of \cite{CEM} (cf. Theorem 7 (i) of \cite{MiaoTam09})]
	If $(M,g)$ admits a non-trivial weak solution $f\in H^1_{loc}$ to \eqref{eq-V-static0}, then $\scal_g$ is constant. That is, an asymptotically hyperbolic V-static manifold has scalar curvature equal to $-n(n-1)$.
\end{prop}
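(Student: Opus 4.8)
The plan is to reduce the statement to showing that $\nabla\scal_g\equiv 0$, the engine for this being a divergence identity for the operator $D\scal^*_g$. Before computing anything, I would first upgrade the regularity of the weak solution: tracing \eqref{eq-V-static0} against $g$ gives the scalar equation $-(n-1)\Delta_g f - f\,\scal_g = n\lambda$, which is a second-order elliptic equation with smooth coefficients (as $g$ is smooth). Standard interior elliptic regularity then promotes the $H^1_{loc}$ weak solution $f$ to a smooth one, so that the full tensorial equation holds classically and the manipulations below are justified.

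The key computation is to take the divergence of both sides of $D\scal^*_g(f)=\lambda g$. Writing $S_{ij}=-(\Delta_g f)g_{ij}+\nabla_i\nabla_j f - f R_{ij}$, I would commute derivatives in the Hessian term via the Bochner identity $\nabla^i\nabla_i\nabla_j f = \nabla_j(\Delta_g f) + R_{jl}\nabla^l f$ and use the contracted second Bianchi identity $\nabla^i R_{ij}=\tfrac12 \nabla_j\scal_g$. The $\nabla_j(\Delta_g f)$ terms cancel, the two Ricci--gradient terms cancel by symmetry of $\ric_g$, and one is left with
\begin{equation*}
	\nabla^i S_{ij} = -\tfrac12\, f\,\nabla_j \scal_g.
\end{equation*}
On the other hand $\nabla^i(\lambda g_{ij})=\nabla_j\lambda=0$ because $\lambda$ is a constant. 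Comparing the two expressions yields the pointwise identity $f\,\nabla\scal_g=0$ on all of $M$.

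It then remains to pass from $f\,\nabla\scal_g=0$ to the global constancy of $\scal_g$, and this is the step requiring the most care. On the open set $\Omega=\{f\neq 0\}$ the identity forces $\nabla\scal_g=0$, so it suffices to show $\Omega$ is dense, i.e. that $\{f=0\}$ has empty interior. I would argue this by unique continuation: if $f$ vanished on a nonempty open set $U$, then all of its covariant derivatives vanish there, so evaluating \eqref{eq-V-static0} on $U$ gives $\lambda g=0$ and hence $\lambda=0$; but then $f$ solves the homogeneous elliptic equation $\Delta_g f=-f\,\scal_g/(n-1)$, and Aronszajn's unique continuation theorem forces $f\equiv 0$ on the connected manifold $M$, contradicting non-triviality. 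Thus $\Omega$ is dense, $\nabla\scal_g$ vanishes on a dense set, and by continuity $\nabla\scal_g\equiv 0$; since $M$ is connected, $\scal_g$ is constant.

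Finally, for an asymptotically hyperbolic manifold the value of this constant is pinned down by the asymptotics. Since $g\in\mathcal R^{2,\alpha}_{\tau}$ is asymptotic to the reference metric $\hg$, whose sectional curvatures tend to $-1$ so that $\scal_{\hg}\to -n(n-1)$, and since $g-\hg\to 0$ at infinity, the now-constant scalar curvature must equal $-n(n-1)$. The main obstacle in the whole argument is the zero-set analysis via unique continuation; the divergence identity, while central, is a routine tensor computation.
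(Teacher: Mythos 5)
Your argument is correct and is essentially the proof the paper relies on: the paper gives no proof of its own here but cites Proposition 2.1 of \cite{CEM}, whose argument is exactly your divergence identity $\nabla^i\bigl(D\scal_g^*(f)\bigr)_{ij}=-\tfrac12 f\,\nabla_j\scal_g$ combined with density of $\{f\neq0\}$ via unique continuation, with the value $-n(n-1)$ then read off from the asymptotics. The only caveat is that your pointwise computation needs $g$ to be at least $C^3$ so that $\nabla\scal_g$ exists classically --- consistent with your smoothness assumption and with the $C^\infty_{loc}$ hypotheses the paper imposes in its main theorems, while \cite{CEM} runs the same identity weakly to cover lower regularity.
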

Still following the compact case (cf. example 1.3 of \cite{CEM}), taking the trace of \eqref{eq-V-static0} then implies that a V-static potential $f$ must satisfy
\begin{equation}
	(-\Delta_g+n)(f-1)=0.
\end{equation}
Since $(-\Delta_g+n):C^{2,\alpha}_\delta\to C^{0,\alpha}_\delta$ is an isomorphism for $\delta\in(-n,1)$ \cite{lee1995spectrum,Lee06}, we have $u\equiv 0$. That is, we have the following.
\begin{prop}\label{prop-V-Einst}
	Let $f$ satisfy \eqref{eq-V-static0} with $f-1\in C^{2,\alpha}_\delta$ for $\delta\in(-n,1)$ on some asymptotically hyperbolic manifold $(M,g)$ without boundary. Then $f\equiv1$ and $\ric_g=(1-n)g$. 
\end{prop}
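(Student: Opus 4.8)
The plan is to collapse the tensorial equation \eqref{eq-V-static0} into a single scalar equation for $f$ by taking a trace, to solve that scalar equation using the mapping properties of $(-\Delta_g+n)$ that are already cited, and then to feed the resulting $f$ back into \eqref{eq-V-static0} to read off the Einstein condition.

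First I would record that, by the preceding proposition, a non-trivial V-static potential forces $\scal_g=-n(n-1)$. Tracing \eqref{eq-V-static0} with respect to $g$, and using that $\Delta_g$ is the trace of the Hessian, gives
\begin{equation*}
	\trace_g\big(D\scal^*_g(f)\big)=-n\Delta_g f+\Delta_g f-f\scal_g=(n-1)\big(-\Delta_g f+n f\big),
\end{equation*}
where the last equality uses $\scal_g=-n(n-1)$, while the right-hand side of \eqref{eq-V-static0} traces to $n\lambda$. Under the normalisation $\lambda=n-1$ (equivalently, the requirement that $f$ be asymptotic to $1$, which pins down $\lambda$ upon passing to the limit at infinity, where the derivative terms decay), this reduces to $-\Delta_g f+nf=n$, that is,
\begin{equation*}
	(-\Delta_g+n)(f-1)=0,
\end{equation*}
using $\Delta_g 1=0$.

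Next I would invoke injectivity. Since $f-1\in C^{2,\alpha}_\delta$ with $\delta\in(-n,1)$ and $(-\Delta_g+n):C^{2,\alpha}_\delta\to C^{0,\alpha}_\delta$ is an isomorphism on this range \cite{lee1995spectrum,Lee06}, the homogeneous equation forces $f-1\equiv0$, i.e.\ $f\equiv1$. Substituting $f\equiv1$ back into \eqref{eq-V-static0} annihilates the Hessian and Laplacian terms, leaving $-\ric_g=\lambda g=(n-1)g$, hence $\ric_g=(1-n)g$, as claimed.

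The genuinely analytic content -- the isomorphism property of $(-\Delta_g+n)$ on the weighted H\"older scale -- is precisely the cited input, so there is no serious obstacle in the argument itself. The only points demanding care are the sign and coefficient bookkeeping in the trace (ensuring the factor $(n-1)$ and the constant $n$ emerge correctly once $\scal_g=-n(n-1)$ is substituted), and the justification that the asymptotic normalisation $f\to1$ is compatible with $\lambda=n-1$ rather than leaving a stray constant on the right-hand side. The latter is handled either by the paper's scaling convention fixing $\lambda=n-1$ throughout, or, for $\delta>0$, by observing directly that a nonzero constant cannot lie in $C^{0,\alpha}_\delta$, which forces the residual constant to vanish.
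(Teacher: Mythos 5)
Your argument is correct and is essentially identical to the paper's own proof: trace \eqref{eq-V-static0} using $\scal_g=-n(n-1)$ from the preceding proposition to obtain $(-\Delta_g+n)(f-1)=0$, invoke the isomorphism $(-\Delta_g+n):C^{2,\alpha}_\delta\to C^{0,\alpha}_\delta$ for $\delta\in(-n,1)$ to get $f\equiv1$, and substitute back to read off $\ric_g=(1-n)g$. Your closing remark on reconciling the normalisation $\lambda=n-1$ with the hypothesis $f-1\in C^{2,\alpha}_\delta$ is a point the paper leaves implicit, and you handle it correctly.
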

Note that the converse is also true. That is, if $\ric_g=(1-n)g$ then $f\equiv1$ solves \eqref{eq-V-static0}. Note that if $M$ has an interior boundary then this argument no longer holds, and V-static is distinct from Einstein, just as with the equivalence between static asymptotically flat and Ricci flat. That is, unless we impose additional restrictions such as $f\equiv 1$ on the boundary. In this case, Dirichlet boundary conditions ensure that $(-\Delta_g+n)$ is still an isomorphism between weighted H\"older spaces \cite{lee1995spectrum}, so we also have the following.
\begin{prop}
	Let $f$ satisfy \eqref{eq-V-static0} with $f-1\in C^{2,\alpha}_\delta$ for $\delta\in(-n,1)$ on some asymptotically hyperbolic manifold $(M,g)$ with boundary. Assume further that $f\equiv 1$ on $\p M$, then $f\equiv1$ and $\ric_g=(1-n)g$.
\end{prop}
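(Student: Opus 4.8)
The plan is to mirror the proof of Proposition \ref{prop-V-Einst} for the no-boundary case, with the Dirichlet condition on $\p M$ now supplying the kernel triviality that the weighted isomorphism provided in the interior-boundary-free setting. First I would note that $f$ is automatically non-trivial: the hypothesis $f-1\in C^{2,\alpha}_\delta$ forces $f$ to be asymptotic to $1$, so $f\not\equiv 0$. Consequently the preceding proposition (Proposition 2.1 of \cite{CEM}) applies and gives $\scal_g\equiv -n(n-1)$.

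Next I would take the trace of the V-static equation \eqref{eq-V-static0}. Using the convention that $\Delta_g$ is the trace of the Hessian, one has $\trace_g\big(D\scal^*_g(f)\big)=(1-n)\Delta_g f-f\scal_g$ and $\trace_g(\lambda g)=n\lambda$. Substituting $\scal_g=-n(n-1)$ and the normalisation $\lambda=n-1$ (forced by $f\to 1$), and recalling $\Delta_g f=\Delta_g(f-1)$, this collapses to precisely $(-\Delta_g+n)(f-1)=0$, exactly as in the discussion preceding Proposition \ref{prop-V-Einst}.

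The decisive step is then to solve this elliptic equation subject to the two constraints $f-1\in C^{2,\alpha}_\delta$ with $\delta\in(-n,1)$ and $f-1\equiv 0$ on $\p M$. Here I would invoke that, under Dirichlet boundary conditions, $(-\Delta_g+n)$ remains an isomorphism between the appropriate weighted H\"older spaces for this range of $\delta$ \cite{lee1995spectrum}; in particular its kernel is trivial. Hence $f-1\equiv 0$, that is $f\equiv 1$. Finally, substituting $f\equiv 1$ back into \eqref{eq-V-static0} yields $-\ric_g=D\scal^*_g(1)=\lambda g=(n-1)g$, so $\ric_g=(1-n)g$, as claimed.

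I expect the only genuinely substantive point to be the kernel triviality of the Dirichlet problem. Intuitively this follows from the maximum principle, since a solution of $\Delta_g u=nu$ with $n>0$ admits no positive interior maximum and no negative interior minimum, so that $u\equiv 0$ on $\p M$ together with decay at infinity would force $u\equiv 0$; but for the full weight range $\delta\in(-n,1)$, which includes weights for which $f-1$ need not decay, I would rely on the cited weighted isomorphism rather than the maximum principle directly. The remainder is the routine trace computation and back-substitution recorded above.
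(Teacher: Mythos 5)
Your proposal is correct and follows essentially the same route as the paper: constancy of the scalar curvature via Proposition 2.1 of \cite{CEM}, tracing \eqref{eq-V-static0} (with the normalisation $\lambda=n-1$) to obtain $(-\Delta_g+n)(f-1)=0$, and then the triviality of the kernel of the Dirichlet problem for $(-\Delta_g+n)$ on $C^{2,\alpha}_\delta$, $\delta\in(-n,1)$, from \cite{lee1995spectrum}, followed by back-substitution to get $\ric_g=(1-n)g$. This is exactly the argument the paper gives in the paragraph preceding the proposition.
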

 A key result of \cite{CEM} is a local deformation result (see Theorems 1.1 and 1.2 therein), demonstrating that if a metric $g$ is not V-static then $g$ can deformed locally on an open set in such a way to make small prescribed changes to scalar curvature and volume simultaneously. We don't need the full power of the result here, and the precise statement is somewhat technical. For this reason we do not explicitly state the full theorem. However, we do state the following corollary of their main result.
\begin{cor}\label{cor-CEM}
	If $g\in \mathcal R^{4,\alpha}_{\tau}$ minimises the volume-renormalised mass with respect to $(\widehat M,\hg)$ on some asymptotically hyperbolic manifold $M$ (with or without interior boundary), then $g$ must be V-static.
\end{cor}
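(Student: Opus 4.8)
The plan is to argue by contraposition, producing a compactly supported interior deformation that keeps the scalar curvature fixed at $-n(n-1)$ while strictly lowering the renormalised volume; by the structure of \eqref{eq-VRmassdefn} this strictly lowers the volume-renormalised mass and contradicts minimality.

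Suppose then that $g$ is not V-static. Fix a bounded open set $\Omega$ in the interior of $M$, disjoint from $\p M$ and from the asymptotic end. The failure of \eqref{eq-V-static0} to admit a non-trivial solution $(f,\lambda)$ is precisely the non-degeneracy hypothesis under which the local deformation theorem of Corvino, Eichmair and Miao (Theorems 1.1 and 1.2 of \cite{CEM}) applies on $\Omega$. Their result provides that the map sending a $C^{4,\alpha}$ deformation $h$ compactly supported in $\Omega$ to the pair
\[
\left(\scal_{g+h}-\scal_g,\ \int_M\big(dV_{g+h}-dV_g\big)\right)
\]
is a local surjection onto a neighbourhood of $(0,0)$ in $C^{2,\alpha}(\Omega)\times\R$. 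The $C^{4,\alpha}$ hypothesis on $g$ is exactly what lets us prescribe a $C^{2,\alpha}$ change of scalar curvature while keeping the resulting perturbation $h\in C^{4,\alpha}$; since the whole construction is local and supported away from infinity, the weighted asymptotics are untouched and the argument carries over verbatim to the asymptotically hyperbolic setting.

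By surjectivity I may choose such an $h$, arbitrarily small, with
\[
\scal_{g+h}-\scal_g=0 \qquad\text{and}\qquad \int_M\big(dV_{g+h}-dV_g\big)=\varepsilon<0.
\]
The first identity gives $\scal_{g+h}\equiv-n(n-1)$, so $g+h$ again has constant scalar curvature and, in particular, satisfies the integrability condition needed for $\m_{VR,\hg}$ to be defined. Because $h$ is compactly supported in the interior, $(g+h)-\hg_0\in C^{4,\alpha}_\tau(S^2T^*M)$ and $g+h>0$ for $h$ small, so $g+h\in\mathcal R^{4,\alpha}_\tau$; the Bartnik data on $\p M$ is also left unchanged, so $g+h$ is admissible whether or not $M$ carries an interior boundary.

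Finally I compare the masses. Since $h$ vanishes near infinity, for all large $R$ the boundary integrand over $S_R$ and the reference term $\int_{\widehat B_R}dV_{\hg}$ in \eqref{eq-VRmassdefn} coincide for $g$ and $g+h$, and only $\int_{B_R}dV_g$ changes. Hence
\[
\m_{VR,\hg}(g+h)-\m_{VR,\hg}(g)=2(n-1)\int_M\big(dV_{g+h}-dV_g\big)=2(n-1)\varepsilon<0,
\]
contradicting the minimality of $g$. Thus $g$ is V-static. I expect the only real subtlety to be the faithful invocation of the CEM deformation theorem — verifying that non-existence of a V-static potential is indeed its hypothesis and that its interior, compactly supported conclusion is insensitive to the slower asymptotically hyperbolic decay — while the mass comparison itself is immediate from the compact support of $h$.
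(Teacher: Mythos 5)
Your argument is correct and is essentially the paper's own proof: both invoke the Corvino--Eichmair--Miao local deformation theorem to produce a compactly supported perturbation that fixes the scalar curvature while strictly decreasing the volume, and then observe that this strictly decreases $\m_{VR,\hg}$ because the surface integral at infinity and the reference volume are untouched. The one point to tighten is that the hypothesis of the CEM theorem is that $g$ is not V-static \emph{on the chosen domain} $\Omega$ (global failure of \eqref{eq-V-static0} does not automatically transfer to an arbitrary fixed $\Omega$), so $\Omega$ must be selected accordingly --- as the paper does by taking ``some open domain $U\subset\subset M$ where $g$ is not V-static'' --- after which your proof coincides with the paper's.
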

\begin{proof}
	Let $(M,g)$ be asymptotically hyperbolic with well-defined volume-renormalised mass with respect to $(\widehat M,\hg)$. If $g$ were not V-static then we can take some open domain $U\subset\subset M$ where $g$ is not V-static. By Theorem 1.2 of \cite{CEM} we can find a new metric $\tilde g$ satisfying the following properties:
	\begin{enumerate}
		\item $\tilde g$ exactly agrees with $g$ outside of $U$,
		\item $\scal_{\tilde g}\equiv \scal_g$ everywhere on $M$, and
		\item the volume of $U$ with respect to $\tilde g$ is strictly less than the volume of $U$ with respect to $g$.
	\end{enumerate}

It immediately follows that $\tilde g$ would have strictly smaller volume-renormalised mass than $g$, and therefore $g$ cannot minimise the volume-renormalised mass if it is not static.
\end{proof}
The above corollary is already a first demonstration of the volume-renormalised mass-minimising property of V-static metrics. It is worth remarking that the proof is somewhat more straightforward than the analogous result for the ADM mass and static asymptotically flat manifolds (cf. Theorem 8 of \cite{corvino2000scalar}), as we can decrease the volume-renormalised mass without affecting the boundary term at infinity. In this article, we aim to understand this connection analogously to Bartnik's variational approach to ADM mass minimisers \cite{Bartnik05}.

\section{The case of no boundary -- Einstein metrics are mass minimisers}\label{S-noBoundary}
As discussed, the relationship between V-static metrics and the volume-renormalised mass can be viewed analogously to the relationship between static metrics and the ADM mass. In fact, it is not only true that many results have direct analogues but also the proofs follow by essentially the same arguments too. Following Bartnik's approach to the ADM \cite{Bartnik05}, we define an analogue of the Regge--Teitelboim Hamiltonian and will apply a Lagrange multipliers argument to it. This modified Regge-Teitelboim functional is given by
\begin{equation}\label{eq-Hdefn0}
	\mathcal H(g)=\m_ {VR,\hg}(g)-\int_M f\left( \scal_g+n(n-1) \right)\,dV_g
\end{equation}
where $f$ is a function asymptotic to $1$ at infinity and we have suppressed reference to $\hg$ and $f$ in the notation. Although neither term in \eqref{eq-Hdefn0} is finite for general $g\in\mathcal R^{2,\alpha}_{\tau}$, in light of the renormalised Einstein--Hilbert action defined in \cite{DKM}, we can quickly convince ourselves that the dominant terms in each should cancel out resulting in something finite when appropriately formulated (see Theorem \ref{thm-DH1} below).

It should be noted that while the standard Regge--Teitelboim Hamiltonian generates the correct equations of motion for the Einstein equations, $\mathcal H$ defined by \eqref{eq-Hdefn0} cannot be viewed as a genuine Hamiltonian. However, $\mathcal H$ is closely related to a reduced Hamiltonian first developed by Fischer and Moncrief \cite{fischer1997hamiltonian,fischer2002hamiltonian}, as indicated in a forthcoming article of Dahl, Kr\"oncke and the author \cite{DKM_forthcoming}. Nevertheless, $\mathcal H$ still plays the role of a Lagrange function for extremising $\m_ {VR,\hg}$ subject to the constraint $\scal_g+n(n-1)=0$, and the Lagrange multiplier we find in this process corresponds to the V-static potential that the minimiser admits. In this section we consider the case where $(M,g)$ has no interior boundary, and then in the following section deal with an interior boundary separately although the argument in both cases is essentially the same. We will make use of the following densitised and normalised scalar curvature
\begin{equation*}
	\mathfrak R(g)=\left(\scal_g+n(n-1) \right)dV_g
\end{equation*}
so that we can write \eqref{eq-Hdefn0} as
\begin{equation}\label{eq-Hsimp0}
	\mathcal H(g)=\m_ {VR,\hg}(g)-\int_M f\,\mathfrak R(g).
\end{equation}
 This form of the Lagrange function emphasises the role of $f$ as the Lagrange multiplier, and $\frakr$ as the constraint map. We will be interested in the constraint set
\begin{equation*}
	\mathcal{C}_o=\frakr^{-1}(0)=\{ g\in \mathcal R^{2,\alpha}_{\tau} | \scal_g=-n(n-1)  \}
\end{equation*}
of constant scalar curvature asymptotically hyperbolic metrics, asymptotic to $(\widehat M,\hg)$. We omit reference to $\alpha$ and $\tau\in(\frac{n-1}2,n)$ for the sake of notational brevity and the subscript $o$ is used to differentiate the constraint set from when we consider a similar set for a manifold with boundary.

As mentioned in the introduction, the key ingredients for this work come from the work of Huang, Jang, and Martin \cite{HJM} who used similar arguments to prove rigidity of the standard asymptotically hyperbolic mass, and the work of Huang and Jang \cite{HJ22} carrying out an analogous analysis for the case with boundary. Unfortunately, since the standard asymptotically hyperbolic mass requires faster decay than we consider here, the metrics considered in \cite{HJM} and \cite{HJ22} decay to the reference metric at a rate $\tau>\frac n2$, which is too strong for our purposes. However, there are fortunately no obstacles preventing their arguments from applying to the case we considered here. In fact, the proofs of most results we require here follow unchanged for our decay rates.

In this section we consider the case where $(M,g)$ has no interior boundary, and then in the following section deal with an interior boundary separately although the argument in both cases is essentially the same. Crucial to applying the method of Lagrange multipliers is that the linearisation of the constraint map -- in this case $\scal_g+n(n-1)$ -- is surjective. This is where the technical difficulty lies, however in the case considered here this was established already by Huang, Jang, and Martin \cite{HJM} for asymptotically hyperbolic manifolds with the standard decay ($\tau\in(\frac n2,n)$) and Dahl, Kr\"oncke and the author \cite{DKM} for the decay rates considered here. In particular we have:
 \begin{prop}
 	For $g\in\mathcal C_o$, the map  $D\frakr_g:T_g\mathcal C_o\to C^{0,\alpha}_{\tau}\left(\Lambda^3T^*M\right)$ is surjective.
 \end{prop}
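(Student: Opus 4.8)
The plan is to reduce the assertion to the isomorphism properties of a single scalar operator already recorded in the preliminaries, exploiting the favourable sign of the scalar curvature on $\mathcal C_o$. Since $\frakr(g)=\big(\scal_g+n(n-1)\big)\,dV_g$ and every $g\in\mathcal C_o$ satisfies $\scal_g=-n(n-1)$, the term in the linearisation that differentiates the volume form carries the vanishing factor $\scal_g+n(n-1)$ and drops out, so for a metric perturbation $h$ one has
\[
	D\frakr_g(h)=\big(D\scal_g(h)\big)\,dV_g .
\]
As multiplication by the fixed volume form is an isomorphism of the relevant weighted spaces, it suffices to prove that $L:=D\scal_g\colon C^{2,\alpha}_\tau(S^2T^*M)\to C^{0,\alpha}_\tau(M)$ is surjective; here the domain is read as the full space of metric perturbations, which is what the Lagrange multiplier Theorem \ref{thm-lagrange} requires.

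Next I would test surjectivity against conformal directions alone. For $h=ug$, the standard formula $D\scal_g(h)=-\Delta_g(\trace_g h)+\mathrm{div}_g\mathrm{div}_g h-\langle\ric_g,h\rangle$ together with $\scal_g=-n(n-1)$ yields the clean identity
\[
	D\scal_g(ug)=(n-1)\big(-\Delta_g+n\big)u .
\]
Hence $L$ is surjective as soon as $-\Delta_g+n\colon C^{2,\alpha}_\tau(M)\to C^{0,\alpha}_\tau(M)$ is: given $v\in C^{0,\alpha}_\tau$ one solves $(-\Delta_g+n)u=\tfrac1{n-1}v$ and sets $h=ug\in C^{2,\alpha}_\tau(S^2T^*M)$. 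It is worth emphasising that this shortcut is special to the asymptotically hyperbolic setting: the negative scalar curvature supplies the zeroth-order term $+n$ that makes the conformally restricted operator coercive, whereas in the scalar-flat asymptotically flat case the analogue is $-(n-1)\Delta$, which is far from surjective on decaying spaces.

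It then remains to record that $-\Delta_g+n$ is an isomorphism on $C_\tau$, which is Lee's isomorphism theorem \cite{lee1995spectrum,Lee06}. The upper bound $\tau<n$ is exactly the indicial-root condition already invoked in the preliminaries to invert $\Delta_g-n$, while injectivity for $\tau>\tfrac{n-1}2$ follows from the energy identity
\[
	\int_M u\big(-\Delta_g u+nu\big)\dv=\int_M\big(|\nabla u|^2+nu^2\big)\dv ,
\]
whose boundary term over $S_R$ scales like $r^{\,n-1-2\tau}$ and therefore vanishes precisely because $\tfrac{n-1}2$ is the $L^2$-critical weight of the end; passing from the model metric to $g$ is a decaying, hence compact, perturbation that preserves the Fredholm index.

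The one genuinely new point relative to \cite{HJM,HJ22}, where the analysis runs only for the faster decay $\tau\in(\tfrac n2,n)$, is the verification that this isomorphism persists all the way down to $\tau>\tfrac{n-1}2$; this is exactly the threshold at which the energy argument still closes, and it was settled for these rates in \cite{DKM}. I would regard this weight bookkeeping as the main, and essentially the only, obstacle, the remainder being the elementary conformal reduction above. A more robust route, closer to the V-static theme of the paper, would instead form the fourth-order elliptic operator $P=L\,D\scal_g^*$ and deduce surjectivity of $L$ from that of $P$; its kernel is trivial because integration by parts turns $Pf=0$ into $D\scal_g^*(f)=0$, and a nontrivial such $f$ is asymptotic to a constant by Corollary \ref{cor-Vstatasymp} and so cannot decay — but the conformal argument is shorter and suffices here.
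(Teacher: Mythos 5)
Your argument is correct, but it takes a genuinely different route from the paper's. Both proofs begin the same way: on $\mathcal C_o$ the term $\tfrac12\trace_g(h)\left(\scal_g+n(n-1)\right)dV_g$ in $D_g\frakr[h]$ vanishes, so everything reduces to surjectivity of $D\scal_g\colon C^{2,\alpha}_{\tau}(S^2T^*M)\to C^{0,\alpha}_{\tau}(M)$, and you are right that the domain must be read as the full space of metric perturbations rather than $\ker D\frakr_g$. At that point the paper simply cites the surjectivity of the linearised scalar curvature operator from Huang--Jang--Martin \cite{HJM}, observing that their proof applies verbatim for $\tau\in(\tfrac{n-1}{2},n)$ (see also \cite{DKM}); that route runs through a coercivity estimate for the adjoint $D\scal_g^*$ and yields more than bare surjectivity. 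You instead restrict to conformal directions $h=ug$, use the identity $D\scal_g(ug)=(n-1)(-\Delta_g+n)u$ (which I have checked and which is valid precisely because $\scal_g=-n(n-1)$), and reduce to Lee's isomorphism theorem for $-\Delta_g+n$ on $C^{k,\alpha}_\delta$ for $\delta$ strictly between the indicial roots $-1$ and $n$ --- a range containing all of $(\tfrac{n-1}{2},n)$, so no new weight analysis is needed. (The interval $(-n,1)$ quoted in Section \ref{S-props} appears to be a transposition of $(-1,n)$; the statement in the preliminaries that $\Delta-n$ is an isomorphism on $C^{k,\alpha}_\tau$ for $\tau<n$ is the one consistent with your usage.) Your proof is shorter, self-contained and constructive; its cost is that it gives only surjectivity, whereas the machinery the paper cites also feeds the decay and regularity analysis of the Lagrange multiplier in Theorem \ref{thm-main1}.

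Two caveats on your asides, neither of which affects the main argument. First, in the alternative route via $P=D\scal_g\,D\scal_g^*$ (which is essentially the route of \cite{HJM}), your justification that $\ker P$ is trivial is off: for a decaying $f$ with $D\scal_g^*(f)=0$, Corollary \ref{cor-Vstatasymp} with $\lambda=0$ puts $f$ in the alternative ``asymptotic to the constant $0$'', which is no contradiction with decay; the correct argument traces the equation to get $(n-1)(-\Delta_g+n)f=0$ and invokes injectivity of $-\Delta_g+n$ on decaying weighted spaces. Second, the claim that the asymptotically flat analogue $-(n-1)\Delta$ is ``far from surjective'' overstates the contrast: on asymptotically flat manifolds the flat Laplacian is surjective onto the naturally weighted target for the relevant range of decay rates, so the conformal reduction is viable there too; the genuine difference lies in the weight bookkeeping, not in the sign of the zeroth-order term alone.
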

\begin{proof}
	Surjectivity of the linearised scalar curvature operator was established in the aforementioned work of Huang, Jang, and Martin \cite{HJM} assuming the standard decay for $g$, namely $\tau\in(\frac n2,n)$. However, this is one of the results mentioned above that apply in our case by the same proof given there, verbatim (the case considered here is also demonstrated in \cite{DKM}). Since $g\in\mathcal{C}_o$ and we have
	\begin{equation}\label{eq-densitiseddr}
		D_g\frakr[h]=D_g\scal[h]dV_g+\frac12\trace_g(h)\left(\scal_g+n(n-1)\right)dV_g,
	\end{equation}
	surjectivity of $D\frakr_g$ follows immediately from the surjectivity of the linearised scalar curvature operator.
\end{proof}

We will also need the following result of Huang, Jang and Martin to control the decay of a V-static potential.

\begin{thm}[Theorem 3.5 of \cite{HJM}]\label{thm-potential-inf}
	Let $g\in\mathcal R^{2,\alpha}\cap C^{\infty}_{loc}$ and $V\in C^{,\alpha}_{loc}(M\setminus K)$, for some compact set $K\in M$, satisfy
	\begin{equation*}
		D\scal_g^*[V]=Z,
	\end{equation*}
	for $Z\in C^{0,\alpha}_{-s}(M\setminus K)$, where $s>0$. Then one of the following holds:
	\begin{enumerate}
		\item there is some cone $U\subset M\setminus K$ and a constant $C>0$ such that
		\begin{equation*}
			C^{-1}|x|\leq |V(x)|\leq C|x|,\qquad \text{ for all }x\in U\text{, or}
		\end{equation*}
		\item there are constants $C>0$ and $0>d\leq1$ such that
		\begin{equation*}
			|V(x)|\leq C|x|^{-d}\qquad\text{ for all } x\in M\setminus K.
		\end{equation*}
	\end{enumerate}
\end{thm}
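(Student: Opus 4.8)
The plan is to reduce the overdetermined tensor equation to a scalar equation on the asymptotic end and then read off the admissible asymptotic behaviours from the indicial roots of the resulting operator. Taking the trace of $D\scal^*_g[V]=Z$ and using $\scal_g=-n(n-1)$ gives
\begin{equation*}
(\Delta_g-n)V=-\tfrac{1}{n-1}\trace_g Z,
\end{equation*}
so $V$ solves a scalar equation whose right-hand side is controlled by $Z$. On the model end, expanding the Laplacian of $\hg=(r^2+k)^{-1}dr^2+r^2h$ in the coordinate $r$ shows that to leading order $\Delta_{\hg}\approx r^2\partial_r^2+nr\partial_r+r^{-2}\Delta_h$, so the indicial equation of $(\Delta_g-n)$ obtained by testing on $r^\mu$ is $\mu^2+(n-1)\mu-n=(\mu-1)(\mu+n)=0$, with roots $\mu_+=1$ and $\mu_-=-n$. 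These two roots correspond precisely to the two alternatives in the statement: the growing mode $|V|\sim|x|$ and a mode decaying faster than any $|x|^{-d}$ with $d\le 1$.

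With the indicial roots in hand, I would establish that $V$ admits a leading-order asymptotic expansion $V=r\,V_0(\theta)+o(r)$ near infinity, where $V_0$ is a function on the conformal boundary $N$ carrying the $\mu_+=1$ mode. The key extra input is the traceless part of the full equation: on an APE background the traceless Hessian $\nabla^2 V-\tfrac1n\Delta_g V\,g$ is controlled by the traceless part of $Z$, which both pins down the regularity of $V_0$ and restricts it to the finite-dimensional space of linear-growth potentials of the model (those generating the conformal data at infinity). The dichotomy then turns on whether $V_0$ vanishes identically. If $V_0\not\equiv 0$, continuity produces an open set $\Omega\subset N$ on which $|V_0|$ is bounded below, and the cone $U=\{(r,\theta):r>R,\ \theta\in\Omega\}$ yields $C^{-1}|x|\le|V(x)|\le C|x|$, which is alternative (1); a local Harnack/elliptic estimate propagates this bound uniformly on $U$. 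If instead $V_0\equiv 0$, then the $\mu_+=1$ mode is absent, and since $(\Delta_g-n)$ has no indicial roots in the open interval $(-n,1)$, the solution is forced to decay, giving $|V(x)|\le C|x|^{-d}$ for some $0<d\le 1$, which is alternative (2).

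The main obstacle is establishing the asymptotic expansion with a controlled remainder, that is, showing that a solution which fails to grow linearly genuinely possesses a well-defined leading coefficient $V_0$ and exhibits no intermediate, oscillatory, or logarithmic behaviour that would blur the dichotomy. This requires weighted elliptic estimates on the end together with the overdetermined structure of $D\scal^*_g$: the trace equation alone governs only the radial rates $r^\mu$, and it is the additional (traceless) components of $D\scal^*_g[V]=Z$ that are needed to constrain the angular dependence of $V$ at the critical order $\mu_+=1$ and thereby upgrade the scalar analysis to the stated pointwise bounds. Since the asymptotics required are exactly those of \cite{HJM}, and the argument there is local near infinity and insensitive to the global decay rate $\tau$, it applies verbatim in our slower-decay setting.
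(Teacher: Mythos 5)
A preliminary remark: the paper does not prove this theorem. It is quoted verbatim from Huang--Jang--Martin (Theorem 3.5 of \cite{HJM}), and the only argument the paper offers is the observation that the proof there is local near infinity, relies on standard elliptic regularity, and is insensitive to the precise weighted space containing $Z$, so it survives the weaker decay hypotheses used here. Your closing sentence falls back on exactly that citation, so in that sense you end up where the paper does; but judged as a proof sketch in its own right, your argument has a genuine gap, and it sits exactly where you yourself locate the ``main obstacle.''

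The gap is that the dichotomy cannot be extracted from the traced equation $(\Delta_g-n)V=-\tfrac{1}{n-1}\trace_g Z$ together with indicial-root bookkeeping. The theorem imposes no a priori growth on $V$, and an indicial analysis only constrains solutions already known to lie in some weighted space: the space of solutions of $(\Delta_g-n)u=0$ on the end with leading behaviour $r\,V_0(\theta)$ is infinite-dimensional ($V_0$ can be essentially any function on $N$), and nothing in the scalar equation alone rules out faster growth, produces a well-defined leading coefficient $V_0$, or yields the two-sided bound $C^{-1}|x|\le |V|$ on a cone. The mechanism that actually delivers all of this is the full tensorial equation: substituting the trace back into $D\scal_g^*[V]=Z$ gives the pointwise identity $\nabla^2 V-Vg=E$, where $E$ is built from $Z$, $\trace_g Z$, and the terms $V\cdot(\ric_g+(n-1)g)$ and $V\cdot(\scal_g+n(n-1))$, all of which decay relative to $1+|V|$ by the APE condition. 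Restricted to unit-speed radial geodesics this becomes the ODE $(V\circ\gamma)''-(V\circ\gamma)=O(e^{-st})\,(1+|V\circ\gamma|)$; a Gronwall argument then gives the a priori bound $|V|\le Ce^{t}\sim C|x|$, the solution along each geodesic splits as $a(\omega)e^{t}+O(1)$ with $a$ continuous in the initial direction $\omega$, and the alternative is decided by whether $a$ vanishes identically (after which the scalar isomorphism upgrades boundedness to $|V|\le C|x|^{-d}$) or not (continuity of $a$ then furnishes the cone). You correctly flag that the traceless part of the system is indispensable and that the remainder control is the crux, but the proposal never writes down $\nabla^2V-Vg=E$ or the ODE comparison it enables, and the assertion that $V_0$ is confined to a finite-dimensional space of model potentials is neither proved nor actually what is needed. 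As written the argument is not self-contained; it is completed only by importing the HJM proof, which is precisely what the paper does.
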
 
Note that in \cite{HJM}, $Z$ is taken to be in a specific weighted H\"older space with decay relayed to the rate $\tau$ at which $g$ decays. However, this too is only because it is the decay required there so we state the result in the generality that the proof provides. Note that we need to require that $g$ be smooth in the above. This is because the proof relies on standard elliptic regularity theory, and for the same reason we will need it for our main results.

We have the following straightforward corollary of Theorem \ref{thm-potential-inf} that shows V-static potentials are either asymptotically constant of grow linearly on a cone analogous to static potentials on asymptotically flat manifolds \cite{miao2015static}.

\vspace{5mm} %ADJUST

\begin{cor}\label{cor-Vstatasymp}
		Let $g\in\mathcal R^{2,\alpha}\cap C^{\infty}_{loc}$ and $V\in C^{,\alpha}_{loc}(M\setminus K)$, for some compact set $K\in M$, satisfy
		\begin{equation*}
			D\scal_g^*[V]-\lambda g=Z,
		\end{equation*}
		for $Z\in C^{0,\alpha}_{-s}(M\setminus K)$, where $s>0$. Then one of the following holds:
		\begin{enumerate}
			\item there is some cone $U\subset M\setminus K$ and a constant $C>0$ such that
			\begin{equation*}
				C^{-1}|x|\leq |V(x)|\leq C|x|,\qquad \text{ for all }x\in U\text{, or}
			\end{equation*}
			\item there are constants $C>0$ and $0>d\leq1$ such that
			\begin{equation*}
				|V(x)-\frac{\lambda}{n-1}|\leq C|x|^{-d}\qquad\text{ for all } x\in M\setminus K.
			\end{equation*}
		\end{enumerate}
In particular, a V-static potential either grows linearly on a cone or is asymptotically constant with the constant given by $\lambda/(n-1)$ with $\lambda$ as in equation \eqref{eq-V-static0}.
\end{cor}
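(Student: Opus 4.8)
The plan is to reduce the statement directly to Theorem \ref{thm-potential-inf} by absorbing the inhomogeneous term $\lambda g$ into a shift of $V$ by a constant. The essential observation is that the constant function is mapped by $D\scal^*_g$ to minus the Ricci tensor: since $D\scal^*_g(f)=-\Delta_g(f)g+\nabla^2_g(f)-f\ric_g$ is linear in $f$, we have $D\scal^*_g[1]=-\ric_g$. On an asymptotically hyperbolic manifold asymptotic to an APE reference, the Ricci curvature is asymptotic to that of hyperbolic space, so the tensor $\ric_g+(n-1)g$ decays. Concretely, writing $\ric_g+(n-1)g=(\ric_{\hg}+(n-1)\hg)+(\ric_g-\ric_{\hg})+(n-1)(g-\hg)$ and using the APE condition together with $g-\hg\in C^{2,\alpha}_\tau$ (so that $\ric_g-\ric_{\hg}\in C^{0,\alpha}_\tau$, as the Ricci tensor depends on two derivatives of the metric) gives $\ric_g+(n-1)g\in C^{0,\alpha}_\tau$.

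With this in hand, I would set $W:=V-\tfrac{\lambda}{n-1}$ and compute, using linearity of $D\scal^*_g$ and the hypothesis $D\scal^*_g[V]=\lambda g+Z$,
\begin{equation*}
	D\scal^*_g[W]=D\scal^*_g[V]-\tfrac{\lambda}{n-1}D\scal^*_g[1]=\lambda g+Z+\tfrac{\lambda}{n-1}\ric_g=Z+\tfrac{\lambda}{n-1}\big(\ric_g+(n-1)g\big),
\end{equation*}
where the final equality uses $\tfrac{\lambda}{n-1}\ric_g=\tfrac{\lambda}{n-1}(\ric_g+(n-1)g)-\lambda g$ so that the two $\lambda g$ terms cancel exactly. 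Denoting the right-hand side by $\tilde Z$, the decay $Z\in C^{0,\alpha}_{-s}$ together with $\ric_g+(n-1)g\in C^{0,\alpha}_\tau\subset C^{0,\alpha}_{-s}$ (as $\tau>0>-s$) shows $\tilde Z\in C^{0,\alpha}_{-s}$ with $s>0$.

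Thus $W$ satisfies $D\scal^*_g[W]=\tilde Z$ with $\tilde Z$ in the weighted space required by Theorem \ref{thm-potential-inf}, and applying that theorem to $W$ yields the dichotomy: either $W$ grows linearly on a cone, or $|W(x)|\leq C|x|^{-d}$ for some $0<d\leq1$. Translating back via $V=W+\tfrac{\lambda}{n-1}$, the bounded constant shift does not affect linear growth on a cone (for large $|x|$ one recovers $C^{-1}|x|\le|V|\le C|x|$ after adjusting constants), giving alternative (1), while the decaying alternative becomes precisely $|V(x)-\tfrac{\lambda}{n-1}|\leq C|x|^{-d}$, giving alternative (2). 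The final assertion about V-static potentials then follows by taking $Z=0$, which lies in every $C^{0,\alpha}_{-s}$, so that the asymptotically constant alternative forces $V$ to tend to $\lambda/(n-1)$. I expect no serious obstacle here: the only point requiring care is the verification that $\ric_g+(n-1)g$ decays, which rests on the APE hypothesis and the decay of $g-\hg$, and the bookkeeping ensuring that the two $\lambda g$ terms cancel exactly rather than merely asymptotically.
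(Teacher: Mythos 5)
Your proposal is correct and follows essentially the same route as the paper: both set $W=V-\tfrac{\lambda}{n-1}$, use $D\scal^*_g[1]=-\ric_g$ to obtain $D\scal^*_g[W]=Z+\tfrac{\lambda}{n-1}\bigl(\ric_g+(n-1)g\bigr)$, and then invoke Theorem \ref{thm-potential-inf} together with the APE-induced decay of $\ric_g+(n-1)g$. Your explicit verification that $\ric_g+(n-1)g\in C^{0,\alpha}_\tau$ is a welcome bit of detail that the paper leaves implicit.
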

\begin{proof}
	Set $u=V-\frac{\lambda}{n-1}$ then note that we have 
	\begin{equation*}
		D\scal_g^*[u]=Z+\frac{\lambda}{n-1}\left(\ric_g+(n-1)g  \right).
	\end{equation*}
	Since $g$ is APE, The result follows directly from Theorem \ref{thm-potential-inf} applied to $u$.
\end{proof}

We are now ready to carry out the main proofs of this section, beginning by demonstrating the that the modified Regge--Teitelboim Hamiltonian is well-defined when suitably regularised.

\begin{thm}\label{thm-regham0}
	The functional $\mathcal H$ defined by \eqref{eq-Hdefn0} with $(f-1)\in C^{0,\alpha}_{\tau}(M)$ can be extended to a functional that is defined on all $\mathcal R^{2,\alpha}_{\tau}$, given by
	\begin{align}\begin{split}\label{eq-regham0}
		\mathcal H(g)=&\,\int_M \,\Biggl( \hnab^i\hnab^jg_{ij}-\widehat\Delta(\hg^{ij}g_{ij})+2(n-1)\left(\frac{\sqrt{g}}{\sqrt{\hg}}-1\right)\\
		&-f\left( \scal_g+n(n-1) \right)\frac{\sqrt{g}}{\sqrt{\hg}}\,\Biggr)\,dV_{\hg},
	\end{split}
	\end{align}
where $\frac{\sqrt{g}}{\sqrt{\hg}}$ is defined by $dV_g=\frac{\sqrt{g}}{\sqrt{\hg}}dV_{\hg}$.
\end{thm}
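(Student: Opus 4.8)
The plan is to produce \eqref{eq-regham0} from \eqref{eq-Hdefn0} by converting the surface integral at infinity in $\m_{VR,\hg}$ into a bulk integral and then checking that the resulting integrand is globally integrable. Applying the divergence theorem with respect to $\hg$ to the one-form $W_j=\hnab^i g_{ij}-\hnab_j(\hg^{ik}g_{ik})$ on the region $B_R$ turns the flux $\int_{S_R}W_j\nu^j\,dS_{\hg}$ into $\int_{B_R}(\hnab^i\hnab^j g_{ij}-\widehat\Delta(\hg^{ij}g_{ij}))\,dV_{\hg}$; rewriting the renormalised volume as $\int_{B_R}2(n-1)(\frac{\sqrt g}{\sqrt{\hg}}-1)\,dV_{\hg}$ and the scalar curvature term via $dV_g=\frac{\sqrt g}{\sqrt{\hg}}dV_{\hg}$ then formally reproduces the integrand of \eqref{eq-regham0}. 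Since the inner region is compact and everything is smooth there, all the content lies in controlling the asymptotic end, where one must show the integrand is in $L^1(dV_{\hg})$ so that the $R\to\infty$ limit exists for \emph{every} $g\in\mathcal R^{2,\alpha}_\tau$, not only on $\mathcal C_o$.

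The main obstacle, and the heart of the matter, is that each of the four terms in \eqref{eq-regham0} decays only like $O(r^{-\tau})$, which against the volume growth $dV_{\hg}\sim r^{n-2}\,dr$ is \emph{not} integrable when $\tau<n-1$; finiteness must come from a cancellation of the slowly-decaying leading parts. Writing $h=g-\hg$ on the end and expanding to first order, the second-derivative term is $\hnab^i\hnab^j g_{ij}-\widehat\Delta(\hg^{ij}g_{ij})=D\scal_{\hg}[h]+\langle\ric_{\hg}+(n-1)\hg,h\rangle-(n-1)\trace_{\hg}h$, the volume term is $2(n-1)(\frac{\sqrt g}{\sqrt{\hg}}-1)=(n-1)\trace_{\hg}h+O(|h|^2)$, and the scalar-curvature term contributes $-(D\scal_{\hg}[h]+(\scal_{\hg}+n(n-1)))$ at leading order. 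The two $(n-1)\trace_{\hg}h$ contributions cancel, and the two copies of $D\scal_{\hg}[h]$ cancel; this is precisely the cancellation of dominant terms anticipated from the renormalised Einstein--Hilbert action of \cite{DKM}.

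What survives is integrable for $\tau\in(\frac{n-1}2,n)$. The term $\langle\ric_{\hg}+(n-1)\hg,h\rangle$ is $O(r^{-2\tau})$ by the APE condition together with $h=O(r^{-\tau})$; the genuinely quadratic remainders from $\sqrt g/\sqrt{\hg}$ and from $\scal_g$ are $O(r^{-2\tau})$; and the corrections $(f-1)$ and $(\frac{\sqrt g}{\sqrt{\hg}}-1)$ multiply $(\scal_g+n(n-1))=O(r^{-\tau})$ to give further $O(r^{-2\tau})$ contributions. Since $\tau>\frac{n-1}2$ gives $2\tau>n-1$, all of these lie in $L^1(dV_{\hg})$. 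The only remaining piece is the reference-only term $-f(\scal_{\hg}+n(n-1))$, which is integrable by the standing assumption that $(\widehat M,\hg)$ is APE and satisfies the scalar-curvature integrability condition making $\m_{VR,\hg}$ well-defined with respect to it.

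Finally, to see that \eqref{eq-regham0} genuinely \emph{extends} \eqref{eq-Hdefn0}, I restrict to $g$ for which $\scal_g+n(n-1)\in L^1$ (in particular to $\mathcal C_o$): there the divergence-theorem identity holds in the limit term by term, $\int_M f\,\frakr(g)$ is separately finite, and $\m_{VR,\hg}(g)$ is given by its defining limit, so \eqref{eq-regham0} reduces exactly to \eqref{eq-Hdefn0}. The only delicate points are the bookkeeping of the cancellation above and verifying that the discarded higher-order terms decay at the claimed $O(r^{-2\tau})$ rate in the weighted norms; the compact inner region contributes only fixed finite boundary data under the divergence theorem and plays no role in convergence.
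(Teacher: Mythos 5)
Your proposal is correct and follows essentially the same route as the paper: convert the flux at infinity into a bulk integral via the divergence theorem, then exhibit the cancellation of the $O(r^{-\tau})$ leading parts (the $D_{\hg}\scal[g-\hg]$ and $(n-1)\trace_{\hg}(g-\hg)$ contributions), leaving only $O(r^{-2\tau})$ remainders integrable for $\tau>\frac{n-1}{2}$ plus the reference term handled by the APE/integrability assumption on $(\widehat M,\hg)$. If anything, your treatment of the term $\langle\ric_{\hg}+(n-1)\hg,\,g-\hg\rangle$ via the APE condition is slightly more careful than the paper's shorthand ``since $\ric_{\hg}=-(n-1)\hg$''.
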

\begin{proof}
	The expression \eqref{eq-regham0} is exactly what one obtains by writing \eqref{eq-VRmassdefn} as a single combined integral over $M$ via the divergence theorem, so we need only demonstrate that the integral converges. To see this, first note that $(1-f)(\scal_g+n(n-1))$ is $O(r^{-2\tau})$ and therefore integrable. So it is equivalent to show that $\mathcal H$ defined by \eqref{eq-regham0} is finite when $f\equiv1$. This is essentially the renormalised Einstein--Hilbert action of \cite{DKM}, which was shown to be finite therein and we see this is well-defined for the same reason. First note that the volume form satisfies
\begin{equation}\label{eq-voltaylor}
	dV_g=dV_{\hg}+\frac12\hg^{ij}(g_{ij}-\hg_{ij})dV_{\hg}+O(r^{-2\tau})dV_{\hg},
\end{equation}
which follows from a Taylor expansion and using the fact that $|g-\hg|^2=O(r^{-2\tau})$. Then since 
\begin{align*}
	D_{\hg}\scal[g-\hg]&=\hnab^i\hnab^jg_{ij}-\widehat\Delta(\hg^{ij}g_{ij})-\ric_{\hg}^{ij}(g_{ij}-\hg_{ij})
\end{align*}
we arrive at
\begin{align*}
			\mathcal H(g)=&\,\int_M \,\Biggl( D_{\hg}\scal[g-\hg]+\ric_{\hg}^{ij}(g_{ij}-\hg_{ij})+(n-1)\hg^{ij}(g_{ij}-\hg_{ij})\\
		&-(\scal_g+n(n-1))\Biggr)\,dV_{\hg}+C
\end{align*}
where $C$ denotes a collection of finite terms obtained by integrating $O(r^{-2\tau})$. Then noting that we have
\begin{equation*}
	\scal_g=-n(n-1)+D_{\hg}\scal[g-\hg]+O(r^{-2\tau}),
\end{equation*}
and since $\ric_{\hg}=-(n-1)\hg$ we see that $\mathcal H$ is well-defined.
\end{proof}

For what follows, it will be useful to first note that a direct computation (see, for example, \cite{Bartnik05}) gives
\begin{align}\begin{split}\label{eq-DRdif}
	f &D_g\frakr[h]-h\cdot D_g\frakr^*[f]\\&=\nabla^i\left( f\left( \nabla^jh_{ij}-\nabla_i(g^{jk}h_{jk}) \right)-\left( h_{ij}\nabla^jf-g^{jk}h_{jk}\nabla_i f \right) \right)dV_g,
	\end{split}
\end{align}
where $D_g\frakr^*$ is the formal adjoint of $D_g\frakr$. For convenience we will write this as $\nabla^i(\mathfrak B_i)dV_g$, since this will result in boundary terms at infinity (and on the inner boundary in the following section). We now compute the variation of the Lagrange function.
\begin{thm}\label{thm-DH1}
	For all $g\in \mathcal R^{2,\alpha}_{\tau}$ and $(f-1)\in C^{0,\alpha}_{\tau}$
	\begin{equation} \label{eq-hamlin}
		D_g\mathcal{H}(h)=(n-1)\int_M\Bigl(\trace_g(h)dV_g-h\cdot D_g\frakr^*[f]\Bigr)
	\end{equation}
	for all $h\in T_g\mathcal R=C^{2,\alpha}_{\tau}(S^2T^*M)$.
\end{thm}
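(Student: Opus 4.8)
The plan is to compute the first variation of the regularised functional \eqref{eq-regham0} directly, since that expression is manifestly finite on all of $\mathcal R^{2,\alpha}_\tau$ and agrees with $\mathcal H$. I would split $\mathcal H(g)=\m_{VR,\hg}(g)-\int_M f\,\frakr(g)$ and differentiate each piece. For the second piece, the variation of $\int_M f\,\frakr(g)$ is simply $\int_M f\,D_g\frakr[h]$, and the natural move is to rewrite this using the divergence identity \eqref{eq-DRdif}: namely $f\,D_g\frakr[h]=h\cdot D_g\frakr^*[f]+\nabla^i(\mathfrak B_i)\,dV_g$. This turns the bulk term into $\int_M h\cdot D_g\frakr^*[f]$ plus a boundary term at infinity $\lim_{R\to\infty}\int_{S_R}\mathfrak B_i\nu^i\,dS$. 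The point of introducing $\mathfrak B_i$ is precisely that this boundary term should combine with the variation of the mass.

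First I would compute $D_g\m_{VR,\hg}(h)$. Here the cleanest route is to observe that $\m_{VR,\hg}$ is the sum of the ADM-type surface integral at infinity and $2(n-1)$ times the renormalised volume. Differentiating the renormalised volume term gives $(n-1)\int_M \trace_g(h)\,dV_g$ (using $\delta\,dV_g=\tfrac12\trace_g(h)\,dV_g$), which is exactly the first term on the right of \eqref{eq-hamlin}. Differentiating the surface integral at infinity produces a boundary term at $S_R$; I expect this to match, up to sign, the boundary contribution $\lim_{R\to\infty}\int_{S_R}\mathfrak B_i\nu^i\,dS$ coming from \eqref{eq-DRdif} when $f$ is asymptotic to $1$. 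The key cancellation is that $\mathfrak B_i$ with $f\to 1$ reduces to the linearised ADM integrand $\hnab^j h_{ij}-\hnab_i(g^{jk}h_{jk})$, which is precisely the variation of the mass surface integral; so the two boundary terms cancel and only the interior terms survive, leaving $(n-1)\int_M(\trace_g(h)\,dV_g - h\cdot D_g\frakr^*[f])$ after absorbing the constant.

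The main obstacle will be justifying the boundary term manipulations in the slow-decay regime $\tau\in(\tfrac{n-1}{2},n)$: one must check that the individual surface integrals at $S_R$ converge (or that their difference does) and that no extra boundary term is left over at infinity. Because $h\in C^{2,\alpha}_\tau$ and $(f-1)\in C^{0,\alpha}_\tau$, the terms in $\mathfrak B_i$ involving $(f-1)$ decay like $O(r^{-2\tau})$ and contribute nothing in the limit, while the $f\equiv 1$ part of $\mathfrak B_i$ cancels against the mass variation as above. I would verify these decay rates carefully, using that $\tau>\tfrac{n-1}{2}$ ensures the relevant products are integrable and the surface terms behave correctly, mirroring the convergence argument already used in Theorem \ref{thm-regham0}. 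The coefficient $(n-1)$ appearing in \eqref{eq-hamlin} should emerge naturally from the $2(n-1)$ in the renormalised volume term combined with the factor of $\tfrac12$ from the volume-form variation.
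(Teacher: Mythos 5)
Your proposal is correct and follows essentially the same route as the paper's proof: both use the identity \eqref{eq-DRdif} to trade $\int f\,D_g\frakr[h]$ for $\int h\cdot D_g\frakr^*[f]$ plus a surface term at $S_R$, observe that the $(f-1)$ and $\nabla f$ contributions to $\mathfrak B_i$ are $O(r^{-2\tau})$ and hence vanish in the limit since $2\tau>n-1$, cancel the surviving linearised ADM integrand against the variation of the mass surface integral, and obtain the $(n-1)\trace_g(h)$ term from the renormalised volume. The only difference is presentational (the paper pushes the surface term back into a bulk integral over $B_R$ against the regularised expression \eqref{eq-regham0}, whereas you cancel the two boundary integrals at $S_R$ directly), which amounts to the same computation.
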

\begin{proof}
	We consider $\mathcal H$ as the limit of integrals over balls $B_R$ of radius $R$ so we can consider different terms separately. The variation of $\int_{B_R} f\,\frakr(g)$ in the direction $h$ can be computed via \eqref{eq-DRdif} as
	\begin{equation*}
		\int_{B_R} fD_g\frakr[h]=\int_{B_R} h\cdot D_g\frakr^*[f]+\int_{S_R}\mathfrak B_i \nu^idS_g,
	\end{equation*}
	where $S_R$ is the boundary of $B_R$. Since $\nabla f$ and $h$ are both $O(r^{-\tau})$, then we can write this as
	\begin{align*}
	\int_{B_R} fD_g\frakr[h]&=\int_{B_R} h\cdot D_g\frakr^*[f]+\int_{S_R}f\left( \nabla^jh_{ij}-\nabla_i(g^{jk}h_{jk}) \right) \nu^idS_g+o(1)\\
	&=\int_{B_R} h\cdot D_g\frakr^*[f]+\int_{S_R}\left( \nabla^jh_{ij}-\nabla_i(g^{jk}h_{jk}) \right) \nu^idS_{\hg}+o(1),
\end{align*}	
where we use the fact that $(f-1)$ and $dV_g-dV_{\hg}$ are $O(r^{-\tau})$. Since the difference of connections tensor for $\nabla$ and $\hnab$ is also $O(r^{-\tau})$, we can replace $\nabla$ with $\hnab$ and the integral over $S_R$ becomes $\int_{B_R}\hnab^i\hnab^jh_{ij}-\widehat\Delta(\hg^{ij}h_{ij})dV_{\hg}$, which exactly cancels the variation of the first two terms in \eqref{eq-regham0} coming from the surface integral at infinity. The only remaining term in \eqref{eq-regham0} to linearise is
\begin{equation*}
	\int_{B_R}2(n-1)\left(\frac{\sqrt{g}}{\sqrt{\hg}}-1\right)\,dV_{\hg},
\end{equation*}
which in the direction $h$ gives $(n-1)\trace_g(h)dV_g$. That is, putting everything together and taking the limit $R\to\infty$ we arrive at \eqref{eq-hamlin}.
\end{proof}

We are now prepared to prove the main result of this section. The Lagrange multiplier argument for critical points of a mass functional like this is quite standard now, stemming from Bartnik's work \cite{Bartnik05}, however we closely follow the argument of \cite{HJM} in particular, as we rely on their analysis.

\begin{thm}\label{thm-main1}
	Suppose $g\in\mathcal C_o\cap C^\infty_{loc}$, then the following three statements are equivalent:
	\begin{enumerate}[label=\Roman*)]
		\item For all $h\in T_g\mathcal C_o$, we have $D_g\m_{VR}[h]=0$,
		\item There exists $f$ with $(f-1)\in C^{2,\alpha}_{\tau}$ satisfying $D_g\frakr^*[f]=(n-1)g\,dV_g$,
		\item $g$ is Einstein with $\ric_g=-(n-1)g$.
	\end{enumerate}
\end{thm}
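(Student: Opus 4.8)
The plan is to prove the cycle of implications by combining two elementary equivalences with one substantial one: the equivalence of (II) and (III) is essentially Proposition \ref{prop-V-Einst} together with its converse, the implication (II)$\Rightarrow$(I) drops out of the variational formula of Theorem \ref{thm-DH1}, and the only real work sits in (I)$\Rightarrow$(II), where the Lagrange multiplier machinery enters.

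First I would dispatch (II)$\Leftrightarrow$(III). Since $g\in\mathcal C_o$ has $\scal_g=-n(n-1)$, the term $\tfrac12\trace_g(h)(\scal_g+n(n-1))$ in \eqref{eq-densitiseddr} vanishes, so $D_g\frakr^*[f]=D_g\scal^*[f]\,dV_g$ and condition (II) reads $D_g\scal^*[f]=(n-1)g$, i.e. \eqref{eq-V-static0} with $\lambda=n-1$. If $g$ is Einstein with $\ric_g=-(n-1)g$, then $f\equiv1$ solves this because $D_g\scal^*[1]=-\ric_g=(n-1)g$, giving (III)$\Rightarrow$(II). Conversely, given $f$ with $(f-1)\in C^{2,\alpha}_\tau$, taking the trace of \eqref{eq-V-static0} yields $(-\Delta_g+n)(f-1)=0$; since $(f-1)\in C^{2,\alpha}_\tau\subset C^{2,\alpha}_\delta$ for some $\delta\in(-n,1)$ on which $(-\Delta_g+n)$ is an isomorphism, Proposition \ref{prop-V-Einst} forces $f\equiv1$ and hence $\ric_g=-(n-1)g$, giving (II)$\Rightarrow$(III).

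For (II)$\Rightarrow$(I), I would simply substitute the V-static equation into the first variation of the Lagrange function. Rewriting Theorem \ref{thm-DH1}, $D_g\mathcal H[h]=\int_M h\cdot\bigl((n-1)g\,dV_g-D_g\frakr^*[f]\bigr)$, which vanishes identically once $D_g\frakr^*[f]=(n-1)g\,dV_g$, so $D_g\mathcal H[h]=0$ for every $h\in T_g\mathcal R$. Restricting to $h\in T_g\mathcal C_o=\kernel D_g\frakr$, where $\int_M f\,D_g\frakr[h]=0$ and hence $D_g\m_{VR}[h]=D_g\mathcal H[h]$, yields $D_g\m_{VR}[h]=0$, which is (I).

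The substantive implication is (I)$\Rightarrow$(II). Here I would apply the Lagrange multiplier theorem (Theorem \ref{thm-lagrange}) with $X=T_g\mathcal R^{2,\alpha}_\tau$, $Y=C^{0,\alpha}_\tau(\Lambda^3T^*M)$, the functional $\m_{VR}$, and the constraint map $\frakr$, whose linearisation is surjective by the proposition established above. Statement (I) says exactly that $D_g\m_{VR}$ annihilates $\kernel D_g\frakr$, so the theorem produces a multiplier $\lambda\in Y^*$ with $D_g\m_{VR}[h]=\lambda(D_g\frakr[h])$ for all $h$. Testing against compactly supported interior $h$, for which the boundary term at infinity in $D_g\m_{VR}[h]$ drops out and $D_g\m_{VR}[h]=(n-1)\int_M\trace_g(h)\,dV_g$, identifies $\lambda$ in a distributional sense with a function $f$ weakly solving the overdetermined elliptic system $D_g\scal^*[f]=(n-1)g$; since $g\in C^\infty_{loc}$, elliptic regularity promotes $f$ to a genuine smooth interior solution. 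The main obstacle is the analysis at infinity: one must show that the abstract multiplier, a priori only an element of the dual of a weighted H\"older space, is represented by a function with the precise decay $(f-1)\in C^{2,\alpha}_\tau$. For this I would invoke Corollary \ref{cor-Vstatasymp}, which forces $f$ either to grow linearly on a cone or to be asymptotic to $\lambda/(n-1)=1$; the boundedness of $\lambda$ as a functional on $C^{0,\alpha}_\tau$ excludes the linearly growing alternative, and the asymptotic analysis of Huang--Jang--Martin then upgrades $f\to1$ to the stated weighted decay. This control of the multiplier's regularity and decay, rather than any formal computation, is where the real difficulty lies.
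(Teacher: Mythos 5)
Your decomposition --- (II)$\Leftrightarrow$(III) via Proposition \ref{prop-V-Einst} and its converse, (II)$\Rightarrow$(I) by substituting the V-static equation into the first variation of the Lagrange function, and (I)$\Rightarrow$(II) by Lagrange multipliers, interior elliptic regularity, and the growth dichotomy of Theorem \ref{thm-potential-inf} with linear growth excluded by the boundedness of the multiplier on $C^{0,\alpha}_\tau$ (using $\tau<n$) --- is exactly the route the paper takes, and those steps are all sound as you describe them.

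The one genuine defect is in how you invoke Theorem \ref{thm-lagrange}: you apply it with ``the functional $\m_{VR}$'' on $X=\mathcal R^{2,\alpha}_\tau$, but $\m_{VR}$ is not a $C^1$ functional on that space --- it is only defined when $\scal_g+n(n-1)\in L^1(M)$, which fails for generic $g\in\mathcal R^{2,\alpha}_\tau$ since the decay rate $\tau$ may lie below $n-1$. The hypotheses of the Lagrange multiplier theorem are therefore not satisfied by $\m_{VR}$ itself. The repair is precisely the paper's reason for introducing $\mathcal H$: Theorem \ref{thm-regham0} extends $\mathcal H$ to a well-defined functional on all of $\mathcal R^{2,\alpha}_\tau$ which agrees with $\m_{VR}$ on the constraint set $\mathcal C_o$, and it is to $\mathcal H$ that Theorem \ref{thm-lagrange} is applied. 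This also changes the bookkeeping slightly: combining the multiplier identity with Theorem \ref{thm-DH1} gives $L[\lambda]=(n-1)g-L[f]$ where $f$ is the fixed reference function in the definition of $\mathcal H$, so the V-static potential is $V=f+\lambda$ rather than the multiplier alone (and the paper in fact concludes $f+\lambda\equiv1$ exactly, via the isomorphism of $\Delta_g-n$ on $C^{2,\alpha}_{-d}$, which gives the weighted decay for free). With that substitution the rest of your argument goes through as written.
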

\begin{proof}
The equivalence between $(II)$ and $(III)$ is precisely Proposition \ref{prop-V-Einst} (and the comment directly below it). So we need only consider the equivalence between $(I)$ and $(II)$.

To this end, suppose first that $(I)$ holds. This implies $D_g\mathcal H[h]=0$ for all $h\in T_g\mathcal C_o$, so the hypotheses of Theorem \ref{thm-lagrange} are satisfied. This then gives us $\lambda\in(C^{0,\alpha}_{\tau}\left(\Lambda^3T^*M\right))^*$ satisfying
\begin{equation*}
	D_g\mathcal H[h]=\lambda(D_g\frakr[h])\qquad \text{ for all }h\in T_g\mathcal C_o,
\end{equation*}
which from Theorem \ref{thm-DH1} gives
\begin{equation}
	\lambda(D_g\frakr[h])+\int_M h\cdot D_h\frakr^*[f]=(n-1)\int_M \trace_g(h) dV_g,
\end{equation}
for all $h\in C^\infty_c$. In particular, as a distribution $\lambda$ is a weak solution to
\begin{equation}\label{eq-weaksoln1}
	L[\lambda]=(n-1)g-L[f],
\end{equation}
where $L$ is the de-densitised $D_g\frakr^*$ operator, $L[\cdot]dV_g=D_g\frakr^*[\cdot]$. Note that the right-hand side of \eqref{eq-weaksoln1} is given explicitly by
\begin{equation*}
	(n-1)g-D\scal_g^*(f)=(n-1)g-\nabla^2 f+\Delta_g(f)g+f\ric_g,
\end{equation*}
where we have used the fact that $g\in \mathcal C_o$. Tracing \eqref{eq-weaksoln1} results in the elliptic equation
\begin{equation}\label{eq-elllipt1}
	-\left(\Delta_g-n\right)\lambda=\left(\Delta_g-n\right)\left(f-1\right),
\end{equation}
which $\lambda$ satisfies in the weak sense. However, Since we assume $g\in C^{\infty}_{loc}$, we have by elliptic regularity, $\lambda\in C^{2,\alpha}_{loc}$. That is $\lambda$ is a strong solution to \eqref{eq-weaksoln1}. We next must show that $\lambda$ has the desired decay at infinity. Since the right-hand side of \eqref{eq-weaksoln1} is in $C^{0,\alpha}_{\tau}$, Theorem \ref{thm-potential-inf} implies that $\lambda$ either grows linearly on a cone at infinity or is in $C^{2,\alpha}_{-d}$ for some $d\in(0,1]$.

Assume for the sake of contradiction that $\lambda$ does grow linearly on $M\setminus K$ for some compact $K$, and then without loss of generality we take $\lambda>0$ on $M\setminus K$. Now consider a family of test functions $u_i\in C^\infty_c(M\setminus K$) converging in $C^{0,\alpha}_{\tau}$ to a non-negative function $u$ that is exactly equal to $|x|^{-\tau}$ outside of a compact set. Since $\lambda$ is continuous we have
\begin{equation*}
	\lambda(u)=\lim_{i\to\infty} \lambda(u_i)=\lim_{i\to\infty}\int_M \lambda u_i\dv_g,
\end{equation*} 
where we continue to abuse notation slightly using $\lambda$ to denote the linear functional and its representation by the $L^2$ inner product. By the monotone convergence theorem we have 
\begin{equation*}
	\lambda(u)=\int_M \lambda u\,\dv_g,
\end{equation*}
which blows up to infinity since $\tau<n$ and $\lambda\geq C|x|$, contradicting the fact that $\lambda$ is bounded. That is, we have $\lambda\in C^{2,\alpha}_{-d}$ for $d\in (0,1]$.

We next observe that the right-hand side of \eqref{eq-elllipt1} belongs to $C^{0,\alpha}_{\tau}\subset C^{0,\alpha}_{-d}$. Then since $-(\Delta_g+n):C^{2,\alpha}_{-d}\to C^{0,\alpha}_{-d}$ is an isomorphism, we have $f+\lambda-1\equiv 0$. That is, by \eqref{eq-weaksoln1}, we have
\begin{equation*}
	L[f+\lambda]=L[1]=(n-1)g,
\end{equation*}
or $g$ is a V-static metric with V-static potential identically equal to $1$. That is we have $(I)\implies(II)\iff(III)$, and it remains to prove $(II)\implies(I)$.

To this end, assume $(II)$ holds, that is a V-static potential $f$ exists. Defining $\mathcal H$ with this choice of $f$, Theorem \ref{thm-DH1} implies $D_g\mathcal H[h]=0$ for all $h$. From the expression \eqref{eq-Hsimp0} for $\mathcal H$ we immediately see that this implies $D_g\m_{VR}[h]=0$ for all $h\in T_g\mathcal C_o$. 
\end{proof}
\begin{rem}
	The equivalence $(I)\iff(III)$ was already established in \cite{DKM} (Corollary 4.4 therein) by different methods, and the inclusion of $(II)$ was the straightforward part of the above proof. However, the proof presented here illuminates the connection between the volume-renormalised mass and V-static metrics. Furthermore, when we consider the case with boundary, where $(III)$ is no longer equivalent to $(II)$, the equivalence $(I)\iff(II)$ continues to hold (Theorem \ref{thm-main2} below).
\end{rem}

\section{Bartnik boundary conditions -- V-static metrics are volume-renormalised mass minimisers}\label{S-boundarycase}
We now turn to the case where $(M,g)$ has an interior boundary. If we do not impose boundary conditions then it seems unreasonable to expect any situation where the volume-renormalised mass can be minimised, so we must choose appropriate boundary conditions. It is natural to impose that the boundary itself is fixed, that is the induced metric on the boundary should be fixed. However, it is also a natural condition that the mean curvature of the boundary be fixed in addition to this, which essentially corresponds to insisting that the scalar curvature be constant right up to and including and distributional contributions on the boundary.

We are therefore interested in the set
\begin{equation*}
	\mathcal C=\{ g\in \widetilde{\mathcal R}^{2,\alpha}_{\tau}\;|\; \frakr(g)=0, g_{\Sigma}= \gamma, H(\Sigma)=H_0 \}
\end{equation*}
where $\gamma$ is some fixed $(n-1)$-dimensional Riemannian metric on $\Sigma$, $H(\Sigma)$ is the mean curvature of $\Sigma$ and $H_0$ is a fixed function on $\Sigma$. We use the notation $\widetilde{\mathcal  R}^{2,\alpha}_{\tau}$ to denote the space $\mathcal  R^{2,\alpha}_{\tau}$ defined by \eqref{eq-Rdefn} in the case where the underlying manifold has a boundary, to distinguish it from the preceding sections.

 In order to conclude that $\mathcal C$ is a Banach manifold, which is a key requirement for the argument we use here, we need surjectivity of the map $T$ defined by
\begin{align}\begin{split}\label{eq-Tdefn}
	T(h)&=(D\scal_g(h),h_{\Sigma}, DH_g(h))\\
	T:\widetilde{\mathcal{R}}^{2,\alpha}_{\tau}\to C^{0,\alpha}_{\tau}&(S^2T^*M)\times C^{2,\alpha}(S^2T^*\Sigma)\times C^{1,\alpha}(\Sigma).
	\end{split}
\end{align}
In the preceding section, surjectivity of the linearised constraint map for the decay rates we consider here were readily available. However, as mentioned in the introduction, in the case with boundary we do not have this immediately available. Fortunately, Huang and Jang \cite{HJ22} prove surjectivity of this map for the standard decay of $\tau\in(\frac n2,n)$, and after a careful examination of their proof, it is clear that this choice of $\tau$ is not essential but rather it is a choice made because they study the usual asymptotically hyperbolic mass and this rate is required for it to be well-defined. In fact, their proof goes through verbatim except in only one proposition where the decay rate is explicitly required. Furthermore, only a superficial modification is required to extend to the values of $\tau$ we work with. Since the full proof is rather involved, and the modification is straightforward, we do not repeat their entire argument here and instead explain the minor adaptation required to cover the case we require. Like many results of this flavour, the proof hinges on a coercivity estimate for $D\scal_g^*$, which in this case is precisely where the range of $\tau$ is explicitly used. The proof is given in Sections 3 and 4 of \cite{HJ22}, and the following Proposition is the only part of it requiring modification.
\begin{prop}[Proposition 3.1 of \cite{HJ22}]\label{prop-HJ22}
	Let $(M,g)$ be asymptotically hyperbolic at a rate of $\tau\in(\frac n2,n)$. Then there exist constants $R_0, C>0$ such that for all $R>R_0$ and any $u\in C^\infty_c(M)$, we have
	\begin{equation}
		\|u\rho^{1/2}\|_{H^2(M\setminus B_R)}\leq C\|D\scal_g^*(u)\rho^{1/2}\|_{H^2(M\setminus B_R)},
	\end{equation}
	where $\rho=r^{-2\tau+n-\delta}$ for some $\delta<1$ sufficiently close to $1$, and $C$ depends on $n$ and $\delta$.
\end{prop}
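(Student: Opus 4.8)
The plan is to reproduce the argument of \cite[Proposition~3.1]{HJ22} essentially verbatim, isolating the one point at which the hypothesis $\tau>\frac n2$ is genuinely used and checking that it relaxes to $\tau>\frac{n-1}2$ once $\delta$ is taken close enough to $1$. The estimate is a weighted coercivity (Carleman-type) inequality for the overdetermined-elliptic operator $D\scal_g^*$ on the asymptotic end, and its engine is a single weighted integration by parts. First I would reduce to the model: since $g-\hg\in C^{2,\alpha}_\tau$ and $(\widehat M,\hg)$ is APE, on $M\setminus B_R$ one may write $D\scal_g^*(u)=D\scal_{\hg}^*(u)+E(u)$, where $E$ is second order with coefficients that are $o(1)$ as $r\to\infty$ (controlled by $|g-\hg|=O(r^{-\tau})$ and $|\ric_{\hg}+(n-1)\hg|_{\hg}\in C^{0,\alpha}_\tau$). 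On the model $\ric_{\hg}\approx-(n-1)\hg$, so $D\scal_{\hg}^*$ has the clean principal part $\nabla^2u-(\Delta u)\hg$, and it suffices to prove the inequality for the model operator and then absorb $E(u)$ into the good terms by choosing $R_0$ large.

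The core is the model coercivity. Here I would pair $D\scal_{\hg}^*(u)$ with a suitable weighted test tensor (schematically $\rho\,\nabla^2u$ together with $\rho\,u\,\hg$) and integrate over $M\setminus B_R$. The algebraic structure of $D\scal_{\hg}^*$ rewrites the integrand, up to a pure divergence, as a positive combination of $|\nabla^2u|^2$, $|\nabla u|^2$ and $u^2$ weighted by $\rho$, which is exactly the quantity controlling $\|u\rho^{1/2}\|_{H^2}$. The divergence contributes only a boundary integral over $S_R$ (the compact support of $u$ kills the contribution at infinity), while the derivatives that fall on $\rho=r^{\beta}$, $\beta:=-2\tau+n-\delta$, produce constant-coefficient lower-order terms, because $|\hnab\log\rho|_{\hg}=|\beta|\,(1+o(1))=O(1)$ on a hyperbolic end. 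The upshot is that the coefficient of $\int_{M\setminus B_R}u^2\rho$ is a definite function of $\beta$ and $n$, and tracking it exactly as in \cite{HJ22} shows that the inequality closes precisely when $\beta$ stays on the correct side of the critical indicial exponent of the weighted problem, namely when $\beta<0$.

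This sign condition $\beta=n-2\tau-\delta<0$, i.e. $\delta>n-2\tau$, is the only place where $\tau$ enters, and verifying that it survives the enlarged range is the main obstacle. For $\tau\in(\frac n2,n)$ one has $n-2\tau<0<\delta$, so any $\delta\in(0,1)$ works and the original proof applies unchanged. For the enlarged range $\tau\in(\frac{n-1}2,n)$ the quantity $n-2\tau$ can be positive but is always strictly less than $1$, so the admissible window $\delta\in(n-2\tau,1)$ is nonempty; choosing $\delta<1$ in this window---necessarily close to $1$ as $\tau\downarrow\frac{n-1}2$---secures $\beta<0$ and hence the strict positivity, with $C$ now depending on $n$ and $\delta$. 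This is exactly why the threshold is $\frac{n-1}2$. The remaining steps are routine and identical to \cite{HJ22}: the full $H^2$ bound follows from the $L^2$-type coercivity together with interior elliptic estimates for the overdetermined-elliptic operator $D\scal_g^*$, and the model error $E(u)$ is absorbed for $R_0$ large since its coefficients decay. I would therefore record only the modified weight range and refer to \cite{HJ22} for the rest.
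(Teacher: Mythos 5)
Your overall architecture (reduce to the model operator $Lu=\nabla^2u-u\hg$, do a weighted integration by parts over $M\setminus B_R$, absorb the $O(r^{-\tau})$ error terms by taking $R_0$ large) matches the paper's, but your diagnosis of where $\tau$ enters is wrong, and the step you dismiss as ``routine and identical to \cite{HJ22}'' is exactly the step that fails and has to be redone. Writing $a=-2\tau+n-\delta$ for the weight exponent (what you call $\beta$; note the paper reserves $\beta$ for a different, auxiliary parameter), the coercivity is \emph{not} governed by the sign condition $a<0$. Because $|\hnab\log r|_{\hg}\to1$ on a hyperbolic end, the terms where derivatives hit the weight are of the \emph{same} order as the main terms, and the argument of \cite{HJ22} closes only if two explicit coefficient polynomials $c_1(a,\beta)$ and $c_2(a,\beta)$ are simultaneously negative for some admissible choice of the completing-the-square parameter $\beta$ in $\int_{S_R}(\beta-\nabla_\nu u)^2r^a\,dS\ge0$. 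With the choice $\beta=\frac a2$ made in \cite{HJ22} one computes $c_1(a,\tfrac a2)=1+an-\tfrac{a^2n}{4}-\tfrac{a^3}{4}$, which equals $1>0$ at $a=0$ and remains positive for $a$ in a neighbourhood of $0$, including negative $a$. When $\tau$ is close to $\frac{n-1}2$, your prescription $\delta\in(n-2\tau,1)$ forces $a=n-2\tau-\delta$ to lie just below $0$ --- precisely inside this bad zone --- so ``the original proof applies unchanged'' is false there, and your claimed threshold $a<0$ is also not necessary (the paper establishes the estimate for $a$ up to $1-\delta>0$).

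The missing content is the re-optimisation of the auxiliary parameter: for the new range $a\in[-\delta,1-\delta)$ one must replace $\beta=\frac a2$ by $\beta=\frac a2-1$, check $c_2(a,\tfrac a2-1)=-\tfrac{a^2}4-n<0$ directly, and then show that the cubic $p(a)=c_1(a,\tfrac a2-1)=-\tfrac14a^3+(1-\tfrac n4)a^2+2(n-1)a+3(1-n)$ is negative on $[-\delta,1-\delta)$, which the paper does by locating the critical points $a_\pm=\tfrac13(4-n)\pm\tfrac23\sqrt{(2-\tfrac n2)^2+3(n-1)}$ and checking the endpoint values. None of this is visible from the sign of $a$ alone, so your proposal as written does not yield the estimate on the enlarged range of $\tau$.
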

We briefly explain how this proposition is proved and how that proof can be modified to extend the range of permissible values of $\tau$.

The proof in \cite{HJ22} follows by a direct computation, after several simplifications are made taking note of the fact that several error terms are small. First, $g$ is taken to be identical to the reference metric $\hg=\frac{1}{r^2+k}dr^2+r^2h$ since taking $R_0$ sufficiently large renders the difference negligible. For the same reasons, we set $k=0$ in the reference metric and introduce the operator
\begin{align*}
	Lu&=D\scal_{\hg}^*(u)-\frac{1}{n-1}\trace_{hg}(D\scal_{\hg}^*(u))\hg+(\hg+\frac{1}{n-1}\scal_{\hg}\hg-\ric_{hg})\\
	&=\nabla^2 u-u\hg.
\end{align*}
Then since $(\hg+\frac{1}{n-1}\scal_{hg}\hg-\ric_{hg})$ goes to zero at infinity, we can prove the estimate for $L$ instead of $D\scal_g$. The final reduction is to note that $\|\nabla^2(u)\rho\|_{L^2(M\setminus B_R)}$ can be controlled by the $L^2(M\setminus B_R)$ norms of $L(u)$ and $u$, so one need only prove
\begin{equation}
	\int_{M\setminus B_R}\left( u^2+|\nabla u|^2 \right)r^{a}\,dV_{\hg}\leq C	\int_{M\setminus B_R}|Lu|^2r^{a}\,dV_{\hg},
\end{equation}
where $a=-2\tau+n-\delta$. The proof goes on to consider two cases depending on whether the exponent $a$ is greater than or less than $-2$, which is equivalent to whether $\tau$ is less than or greater than $1+\frac12(n-\delta)$. In particular the case we would like to extend is when $a\geq-2$ (or $\tau\leq 1+\frac12(n-\delta)$, since $\delta$ will be chosen close to $1$). The requirement that $\tau>\frac{n}2$ in \cite{HJ22} implies also that they work with $a<-\delta$, that is $a\in[-2,-\delta)$. In order to replace the conditions $\tau>\frac{n}2$ with $\tau>\frac{n-1}2$, we therefore must carry out the argument in the case $a\in[-\delta,1-\delta)$. This can be achieved by following ``Case 1'' of the proof in \cite{HJ22}, which obtains the estimate beginning from the nonnegativity of the quantity
\begin{equation*}
	\int_{S_R}\left( \beta -\nabla_\nu(u) \right)^2r^a \, dS,
\end{equation*}
where $\beta\in\mathbb R$ is to be chosen carefully later. This choice of $\beta$ is precisely the difference made here. After some direct computations Huang and Jang obtain the inequality
\begin{align*}
	0\leq& \int_{S_R}\left( \beta -\nabla_\nu(u) \right)^2r^a \, dS\\
	\leq& \int_{M\setminus B_R}\left(  \beta(2n-\beta(a-1+n))+\beta^2+1-a\beta  \right)u^2r^a\,dV\\
	&+\int_{M\setminus B_R}\left(2\beta-(a-1+n)+\beta^2+1-a\beta\right)|\nabla u|^2 r^a\,dV\\
	&+\int_{M\setminus B_R}\left(2\beta u \trace(Lu)-2(Lu)(\nabla u,\nu)\right)r^a\,dV,
\end{align*}
which relies on the fact that $a\in[-2,2]$. The idea is to then show that for some choice of $\beta$ the quantities
\begin{align*}
	c_1(a,\beta)&= \beta(2n-\beta(a-1+n))+\beta^2+1-a\beta\\
	& \text{ and}\\
	c_2(a,\beta)&=(2\beta-(a-1+n)+\beta^2+1-a\beta
\end{align*}
are negative, to give
\begin{equation}\label{eq-mainestimatestep}
	0\leq -\varepsilon \int_{M\setminus B_R}\left( u^2+|\nabla u|^2 \right)r^a\,dV+\int_{M\setminus B_R} \left(  \beta u\trace(Lu)-(Lu)(\nabla u,\nu)  \right) r^a\, dV,
\end{equation}
which eventually gives the desired estimate via Cauchy--Schwarz. They key here is in choosing $\beta$ in such a way that ensures $c_1$ and $c_2$ are negative for all values of $a$ in the range under consideration -- in this case $a\in [-\delta,1-\delta)$ where $\delta<1$ is very close to $1$. For this, the choice $\beta=\frac{a}{2}$ used in \cite{HJ22} (in Case 1 therein) does not work, however choosing $\beta=\frac a2-1$ will suffice. We can readily see that $c_2(a,\frac a2-1)<0$ since
\begin{equation*}
	c_2(a,\beta)=(\beta-(\frac a2-1))^2-\frac{a^2}{4}-n,
\end{equation*}
however we need to work a little to show $c_1(a,\frac a2-1)<0$. First note that it can be expressed as a third order polynomial in $a$,
\begin{equation}\label{eq-thepolynom}
	p(a)=c_1(a,\frac a2-1)=-\frac14 a^3+(1-\frac{n}{4})a^2+2(n-1)a+3(1-n).
\end{equation}
For $\delta$ close to $1$ we can readily check that  $p(\delta)$ and $p(1-\delta)$ are both negative, so we can simply check that $p'(a)\neq0$ for any $a\in(-1,1+\varepsilon)$ for some small $\epsilon>0$. Differentiating \eqref{eq-thepolynom} with respect to $a$ and solving for $p'(a)=0$ we find the solutions to be
\begin{equation*}
	a_\pm=\frac13 (4-n)\pm \frac23\sqrt{(2-\frac n2)^2+3(n-1)}.
\end{equation*}
For all $n\geq3$ we find that $a_+>0$ and $a_-<-1$, from which we can conclude that for all $a\in[- \delta,1-\delta)$, for $\delta<1$ sufficiently close to $1$, $c_1(a,\frac a2-1)<0$. In particular, we have that \eqref{eq-mainestimatestep} holds, which suffices to establish the main estimate as in the proof of Proposition 3.1 of \cite{HJ22}. That is, Proposition \ref{prop-HJ22} holds for the range of value of $\tau$ required here, and have the following.
\begin{propprime}{prop-HJ22}[Cf. Proposition 3.1 of \cite{HJ22}]
	Let $(M,g)$ be asymptotically hyperbolic at a rate of $\tau\in(\frac {n-1}2,n)$. Then there exist constants $R_0, C>0$ such that for all $R>R_0$ and any $u\in C^\infty_c(M)$, we have
	\begin{equation}
		\|u\rho^{1/2}\|_{H^2(M\setminus B_R)}\leq C\|D\scal_g^*(u)\rho^{1/2}\|_{H^2(M\setminus B_R)},
	\end{equation}
	where $\rho=r^{-2\tau+n-\delta}$ for some $\delta<1$ sufficiently close to $1$, and $C$ depends on $n$ and $\delta$.
\end{propprime} 
As mentioned above, the rest of the proof that $T$, defined by \eqref{eq-Tdefn}, is surjective (Theorem 4.1 of \cite{HJ22}) goes through identically as in \cite{HJ22}. That is, following the arguments in sections 3 and 4 therein verbatim from this point onward, keeping the extended range of permissible values of $\tau$, we arrive at the following Theorem.

\begin{thm}[Cf. Theorem 4.1 of \cite{HJ22}]\label{thm-surjT}
	Let $(M,g)$ be an asymptotically hyperbolic manifold with compact inner boundary, asymptotic to $(\widehat M,\hg)$ at a rate of $\tau\in(\frac {n-1}2,n)$, then the map $T$ defined by \eqref{eq-Tdefn}, is surjective.
\end{thm}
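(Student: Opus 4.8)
The plan is to prove surjectivity of $T$ by the duality argument for adjoint-elliptic constraint operators, originally due to Corvino and Schoen and adapted to the asymptotically hyperbolic boundary setting by Huang and Jang \cite{HJ22}. The formal $L^2$-adjoint of $T$ is controlled by $D\scal_g^*$ in the interior together with the boundary operators dual to prescribing $h_\Sigma$ and $DH_g(h)$ on $\Sigma$, and surjectivity of $T$ is equivalent to a coercivity (injectivity-with-closed-range) statement for this adjoint in suitably weighted spaces. Fixing data $(\psi,\kappa,\eta)$ in the target, I would seek a solution of the form $h=\rho\,D\scal_g^*(u)$ for the weight $\rho=r^{-2\tau+n-\delta}$ appearing in Proposition~\ref{prop-HJ22}$'$, thereby reducing the problem to solving a fourth-order elliptic system for a potential $u$ subject to the dual boundary conditions on $\Sigma$.

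The analytic heart of the argument is to realise $u$ as the minimiser of a convex functional of the schematic form
\begin{equation*}
	\mathcal{F}(u)=\frac12\int_M |D\scal_g^*(u)|^2\,\rho\,dV_g+(\text{boundary terms on }\Sigma)-\ell(u),
\end{equation*}
over a weighted Hilbert space, where $\ell$ is the bounded linear functional encoding $(\psi,\kappa,\eta)$. The crucial input is coercivity of $\mathcal F$: on the exterior region $M\setminus B_R$ this is exactly the weighted estimate of Proposition~\ref{prop-HJ22}$'$, which we have just extended to $\tau\in(\frac{n-1}2,n)$, while on the compact piece $B_R$ it follows from an interior coercivity estimate for $D\scal_g^*$ together with unique continuation ruling out a nontrivial kernel. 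Patching these with a cutoff yields a global coercivity inequality, so the direct method produces a minimiser $u$ whose Euler--Lagrange equation shows that $h=\rho\,D\scal_g^*(u)$ is a weak solution of $T(h)=(\psi,\kappa,\eta)$ with the natural dual boundary conditions.

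Finally I would upgrade regularity: since $g\in C^\infty_{loc}$, interior elliptic regularity makes $u$ a classical solution, boundary Schauder theory gives regularity up to $\Sigma$, and weighted Schauder estimates at infinity — using $\tau<n$ exactly as in the no-boundary case — yield the decay $h\in C^{2,\alpha}_\tau$, placing the solution in $\widetilde{\mathcal R}^{2,\alpha}_\tau$. I expect the coercivity estimate near infinity to be the only genuine obstacle, precisely because the slower rate $\tau>\frac{n-1}2$ narrows the margin in the weighted integration by parts and forces the modified parameter choice; this is the single point at which the range of $\tau$ enters, and it is exactly the content of Proposition~\ref{prop-HJ22}$'$. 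Once that estimate is secured, the remaining steps are insensitive to the decay rate and follow the argument of \cite{HJ22} verbatim, giving Theorem~\ref{thm-surjT}.
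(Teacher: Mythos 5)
Your proposal follows essentially the same route as the paper: the paper likewise reduces the theorem to the single point where the decay rate $\tau>\frac{n-1}{2}$ enters, namely the weighted coercivity estimate of Proposition~\ref{prop-HJ22}$'$, and then invokes the remainder of the Corvino--Schoen-style duality/variational argument of Sections 3 and 4 of \cite{HJ22} verbatim. Your sketch of that argument (the ansatz $h=\rho\,D\scal_g^*(u)$, the convex functional, interior coercivity plus unique continuation, and the regularity upgrade) is an accurate account of what the cited proof does, so the two approaches coincide.
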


We now carry out the Lagrange multiplier argument similar to the preceding section. For this, we again would like to use the function $\mathcal H$ defined by \eqref{eq-Hdefn0}. However, in this case when we regularise the functional via the divergence theorem, we obtain some additional boundary terms on $\Sigma$, which motivates the choice of boundary conditions we use.

\begin{thm}\label{thm-regham2}
	The functional $\mathcal H$ defined by \eqref{eq-Hdefn0} with $(f-1)\in C^{0,\alpha}_{\tau}(M)$ can be extended to a functional that is defined on all $\widetilde{\mathcal R}^{2,\alpha}_{\tau}$. Furthermore, its linearisation is given by
	\begin{align} \label{eq-hamlin3}
		D_g\mathcal{H}(h)=&\,\int_M  \Bigl( (n-1)  \trace_g(h)dV_g-h\cdot D_g\frakr^*[f]\Bigr)\\
		&+\int_{\p M}f\left( \nabla^jh_{ij}-\nabla_i(\trace_g(h)) \right)-\left( h_{ij}\nabla^jf-\trace_g(h)\nabla_if \right) \nu^idS_g.\nonumber
	\end{align}
	for all $h\in T_g\widetilde{\mathcal R}=C^{2,\alpha}_{\tau}(S^2T^*M)$, where $\nu$ is the unit normal pointed towards infinity.
\end{thm}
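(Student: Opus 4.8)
The plan is to follow the proofs of Theorems \ref{thm-regham0} and \ref{thm-DH1} essentially verbatim, the only new feature being that the manifold now carries a compact inner boundary $\Sigma=\p M$, so that every application of the divergence theorem is taken over the region $B_R$ lying \emph{between} $\Sigma$ and $S_R$ and therefore produces an extra boundary integral over $\Sigma$. All of the estimates at infinity used in those proofs are local near the asymptotic end and are insensitive to the presence of $\Sigma$, so they carry over unchanged; the substance of the argument is thus entirely in correctly extracting and orienting the $\Sigma$-boundary term.

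For the extension to all of $\widetilde{\mathcal R}^{2,\alpha}_\tau$, I would argue as in Theorem \ref{thm-regham0}: rewriting the surface integral at infinity in \eqref{eq-VRmassdefn} as a volume integral via the divergence theorem on $B_R$ now yields the volume-integral expression of \eqref{eq-regham0} together with a fixed boundary integral over $\Sigma$. The volume integral converges by exactly the argument of Theorem \ref{thm-regham0}, which concerns only the rate of decay of $g$ towards $\hg$ at infinity, and since $\Sigma$ is compact and $g,\hg_0,f$ are smooth up to $\Sigma$, the additional boundary integral is automatically finite. Hence $\mathcal H$ extends to a well-defined functional on $\widetilde{\mathcal R}^{2,\alpha}_\tau$.

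For the linearisation I would again treat $\mathcal H$ as a limit of integrals over $B_R$ and differentiate term by term in the direction $h$, as in Theorem \ref{thm-DH1}. The variation of the mass part contributes $\int_{S_R}(\hnab^i h_{ij}-\hnab_j(\hg^{ik}h_{ik}))\nu^j\,dS_{\hg}$ together with $(n-1)\int_{B_R}\trace_g(h)\,dV_g$, while the variation of $\int_{B_R} f\,\frakr(g)$ is handled by the pointwise identity \eqref{eq-DRdif}: integrating $\nabla^i\mathfrak B_i\,dV_g$ over $B_R$ now produces both the surface term $\int_{S_R}\mathfrak B_i\nu^i\,dS_g$ \emph{and} a term on $\Sigma$. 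The two $S_R$ integrals cancel in the limit $R\to\infty$ by the same decay argument as in Theorem \ref{thm-DH1} (replacing $\nabla$ by $\hnab$, $f$ by $1$, and $dS_g$ by $dS_{\hg}$ up to $o(1)$), leaving precisely $(n-1)\int_M\trace_g(h)\,dV_g-\int_M h\cdot D_g\frakr^*[f]$ plus the surviving $\Sigma$-integral. Reading $\mathfrak B_i\nu^i$ off from \eqref{eq-DRdif} and recalling $\trace_g(h)=g^{jk}h_{jk}$ identifies this boundary term with the integrand displayed in \eqref{eq-hamlin3}.

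The one point requiring genuine care — and the step I would regard as the main source of potential error rather than a deep obstacle — is the sign and orientation of the $\Sigma$-integral. Since $B_R$ is the region between $\Sigma$ and $S_R$, its outward unit normal agrees with $\nu$ on $S_R$ but equals $-\nu$ on $\Sigma$ under the convention that $\nu$ points towards infinity; tracking this is exactly what produces the boundary term with the sign appearing in \eqref{eq-hamlin3}. I would also stress that, unlike at $S_R$, no asymptotic simplifications are available on $\Sigma$, so the $\Sigma$-term must retain $f$, the Levi-Civita connection $\nabla$ of $g$, and the area element $dS_g$ exactly as written, rather than their hatted or linearised counterparts.
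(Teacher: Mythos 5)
Your proposal is correct and follows essentially the same route as the paper: the well-definedness is inherited from Theorem \ref{thm-regham0} up to finite boundary contributions on the compact $\Sigma$, and the linearisation is obtained exactly as in Theorem \ref{thm-DH1}, with the divergence theorem applied to $\nabla^i(\mathfrak B_i)\,dV_g$ over the region between $\Sigma$ and $S_R$ now producing the additional term $\int_{\p M}\mathfrak B_i\nu^i\,dS_g$. Your extra care over the orientation of the normal on $\Sigma$ (and the resulting sign, which combines with the minus sign in front of $\int_M f\,\frakr(g)$ to give the $+$ in \eqref{eq-hamlin3}) is consistent with the stated formula.
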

\begin{proof}
First note that the inclusion of a boundary does not affect the well-definedness of $\mathcal H$. That is, the proof of Theorem \ref{thm-regham0} applies identically except for the addition of some finite terms on the inner boundary which we do not both to explicitly write out.

In order to establish \eqref{eq-hamlin3}, follow Theorem \ref{thm-DH1} and again consider the difference
\begin{equation*}
		f D_g\frakr[h]-h\cdot D_g\frakr^*[f]=\nabla^i(\mathfrak B_i)dV_g,
\end{equation*}
given by \eqref{eq-DRdif}. The only term here that differs from the proof of Theorem \ref{thm-DH1} is that after the integration by parts, we gain the additional term
\begin{equation}
	\int_{\p M} \mathfrak B_i\nu^i\,dV_g,
\end{equation}
which is exactly the boundary term appearing in \eqref{eq-hamlin3}.
\end{proof}

We now aim to apply the Lagrange multipliers argument using the boundary-augmented constraint map
\begin{equation*}
	\frak T (g)=\left( \frakr(g),g_{|\p M}, H\right),
\end{equation*}
where $H$ is the mean curvature of $\p M$ with respect to $\nu$. Note that $D_g\mathfrak T$ is a densitised version of the $T$ operator defined by \eqref{eq-Tdefn}, and therefore is surjective for $g\in \mathcal C$ by Theorem \ref{thm-surjT}.

\begin{thm}\label{thm-main2}
	Suppose $g\in\mathcal C$ is $C^\infty_{loc}$, then the following two statements are equivalent:
	\begin{enumerate}[label=\Roman*)]
		\item For all $h\in T_g\mathcal C$, we have $D_g\m_{VR}[h]=0$,
		\item There exists $V$ satisfying $(V-1)\in C^{2,\alpha}_{\tau}$ and $D_g\frakr^*[V]=(n-1)g\,dV_g$.
	\end{enumerate}
\end{thm}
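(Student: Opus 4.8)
The plan is to run the Lagrange multiplier scheme of Theorem \ref{thm-main1}, now replacing the constraint $\frakr$ by the boundary-augmented map $\mathfrak{T}(g)=(\frakr(g),g_{|\p M},H)$, which is surjective at every $g\in\mathcal C$ by Theorem \ref{thm-surjT}. Two features distinguish the present case from Theorem \ref{thm-main1}: the extra boundary integral appearing in \eqref{eq-hamlin3}, and the fact that the static potential is no longer forced to equal $1$, so the injectivity shortcut of the no-boundary case is unavailable.

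For $(I)\Rightarrow(II)$, assume $D_g\m_{VR}[h]=0$ for all $h\in T_g\mathcal C=\ker D_g\mathfrak T$. Since $D_g\frakr[h]=0$ on $T_g\mathcal C$, the functional $\mathcal H$ (built with a fixed $f$ asymptotic to $1$) satisfies $D_g\mathcal H[h]=D_g\m_{VR}[h]=0$ there, so Theorem \ref{thm-lagrange} yields a multiplier $\Lambda=(\lambda,\mu,\eta)$ with $D_g\mathcal H[h]=\lambda(D_g\frakr[h])+\mu(h_{|\p M})+\eta(DH_g(h))$ for all $h$. Testing against $h\in C^\infty_c$ supported in the interior annihilates $\mu$, $\eta$ and the boundary integral of \eqref{eq-hamlin3}, leaving exactly the interior identity of Theorem \ref{thm-main1}; hence $\lambda$ is a weak, and by elliptic regularity (using $g\in C^\infty_{loc}$) a strong, interior solution of $D_g\frakr^*[f+\lambda]=(n-1)g\,dV_g$. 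Writing $V:=f+\lambda$, the monotone-convergence/boundedness argument of Theorem \ref{thm-main1} rules out linear growth, so Corollary \ref{cor-Vstatasymp} gives $V-1=O(r^{-d})$ for some $d\in(0,1]$. This is where the argument departs from the no-boundary case: instead of forcing $V\equiv1$, I trace the V-static equation $D\scal_g^*(V)=(n-1)g$ and use $\scal_g=-n(n-1)$ to obtain the homogeneous equation $(-\Delta_g+n)(V-1)=0$. Since $V-1$ is bounded, the indicial roots $-1$ and $n$ of $(-\Delta_g+n)$ on the asymptotically hyperbolic end exclude the growing behaviour $r$, and the standard weighted-space bootstrap forces $V-1=O(r^{-n})$; as $\tau<n$ this gives $(V-1)\in C^{2,\alpha}_\tau$, which is precisely $(II)$.

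For $(II)\Rightarrow(I)$, take $f=V$ in $\mathcal H$. Then $D_g\frakr^*[V]=(n-1)g\,dV_g$ makes the bulk integrand $(n-1)\trace_g(h)dV_g-h\cdot D_g\frakr^*[V]$ vanish identically, so by Theorem \ref{thm-regham2} the variation $D_g\mathcal H[h]$ reduces to the boundary integral $\int_{\p M}\mathfrak B_i\nu^i\,dS_g$ of \eqref{eq-hamlin3}. It then remains to show this vanishes for $h\in T_g\mathcal C$, i.e.\ when $h_{|\p M}=0$ and $DH_g(h)=0$. Working in Gaussian normal coordinates about $\p M$ and using $h_{ab}=0$ on $\p M$, a direct computation rewrites the integrand as $\mathfrak B_i\nu^i=-V\,\mathrm{div}_{\p M}(h_{\nu\cdot})-\langle d_{\p M}V,h_{\nu\cdot}\rangle-2V\,DH_g(h)$, where the last term is assembled from the normal-derivative contributions via the linearised mean curvature formula $DH_g(h)=\tfrac12 g^{ab}\nabla_\nu h_{ab}-\mathrm{div}_{\p M}(h_{\nu\cdot})-\tfrac12 H\,h_{\nu\nu}$ (valid on $\p M$ when $h_{ab}=0$). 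Imposing $DH_g(h)=0$ and integrating the divergence term by parts over the closed hypersurface $\p M$ cancels it against $\langle d_{\p M}V,h_{\nu\cdot}\rangle$, so $\int_{\p M}\mathfrak B_i\nu^i\,dS_g=0$. Since $D_g\mathcal H[h]=D_g\m_{VR}[h]$ on $T_g\mathcal C$ (because $D_g\frakr[h]=0$), this yields $(I)$.

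The main obstacle is the boundary computation in $(II)\Rightarrow(I)$: verifying that the flux $\int_{\p M}\mathfrak B_i\nu^i\,dS_g$ is annihilated by exactly the Bartnik data $(g_{\p M},H)$, i.e.\ that fixing the induced metric and the mean curvature are the conditions conjugate to the boundary term generated by $D\scal_g^*$. This hinges on inserting the precise linearised mean curvature formula and on the cancellation after integration by parts on $\p M$; notably it holds for \emph{any} $V$, with no boundary condition imposed on the potential. The only other delicate point is the decay upgrade in $(I)\Rightarrow(II)$, where $V$ is controlled not by the global isomorphism of $(-\Delta_g+n)$ (which fails once a boundary is present) but by the indicial analysis of this operator at infinity.
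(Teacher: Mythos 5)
Your proposal is correct and, in structure, is the paper's own proof: the Lagrange multiplier is taken against the boundary-augmented constraint $\mathfrak T$ (surjective by Theorem \ref{thm-surjT}), interior test functions isolate the V-static equation for $V=f+\lambda$, and your $(II)\Rightarrow(I)$ computation is exactly the paper's --- your expression for $\mathfrak B_i\nu^i$ when $h_{|\p M}=0$ agrees with \eqref{eq-DH2}, and the cancellation of $-V\,\mathrm{div}_{\p M}(h_{\nu\cdot})$ against $-\langle d_{\p M}V,h_{\nu\cdot}\rangle$ after integrating by parts on the closed hypersurface $\p M$ is the intended mechanism (the exact coefficient of $Hh_{\nu\nu}$ in your $DH$ formula is immaterial since that term is annihilated by $D_gH[h]=0$). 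The one genuine divergence is the decay upgrade in $(I)\Rightarrow(II)$: the paper keeps the inhomogeneous equation $-(\Delta_g-n)\lambda=(\Delta_g-n)(f-1)\in C^{0,\alpha}_{\tau}$ and uses that this operator with Dirichlet boundary conditions is an isomorphism on the weighted spaces for all weights between $d$ and $\tau$, concluding $\lambda\in C^{2,\alpha}_{\tau}$ by uniqueness across weights; you instead trace the V-static equation to obtain the homogeneous equation $(-\Delta_g+n)(V-1)=0$ and invoke the indicial roots at infinity to push the bounded solution down to $O(r^{-n})$. Your route has the advantage of being insensitive to the interior boundary (no discussion of boundary conditions for the Laplacian is needed), but it rests on a decay-improvement lemma for homogeneous solutions on the asymptotically hyperbolic end which you should cite explicitly (it is part of Lee's Fredholm/asymptotic-expansion theory) rather than label a ``standard bootstrap''. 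The only point you pass over that the paper addresses, albeit briefly, is that $\lambda$ must be shown to be $C^{2,\alpha}$ up to $\p M$ (the paper reads this off from the overdetermined system \eqref{eq-Vstatx}); interior elliptic regularity alone does not give this.
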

\begin{proof}
Just as in the case with no boundary, we note that $D_g\m_{VR}[h]=D_g\mathcal H[h]$ for all $h\in T_g\mathcal C=\ker(D_g\mathfrak T)$. If we first assume $(I)$ holds then the hypotheses of Theorem \ref{thm-lagrange} holds and we therefore have a Lagrange multiplier
\begin{equation*}
	\left(\lambda, \alpha,\beta \right)\in \left( C^{0,\alpha}_{\tau}\left(\Lambda^3T^*M\right)\right)^*\times\left( C^{2,\alpha}(S^2T^*\p M)  \right)^*\times\left( C^{1,\alpha}(\p M) \right)^*
\end{equation*}
that satisfies
\begin{equation*}
	D_g\mathcal H[h]=\lambda(D_g\frakr[h])+\alpha(h_{|\p M})+\beta(D_gH[h]).
\end{equation*}
Taking $h\in C^\infty_c(Int(M))$, we have
\begin{equation*}
	D_g\mathcal H[h]=\lambda(D_g\frakr[h])=\int_M  \Bigl( (n-1)  \trace_g(h)dV_g-h\cdot D_g\frakr^*[f]\Bigr),
\end{equation*}
just as in the proof of Theorem \ref{thm-main1}. That is, as a distribution $\lambda$ is a weak solution to
\begin{equation}\label{eq-Vstatx}
	D_g\frakr^*[\lambda]=(n-1)g\,dV_g-D_g\frakr^*[f],
\end{equation}
which as before is implies that $\lambda\in C^{2,\alpha}_{loc}$ by elliptic regularity, as it satisfies \eqref{eq-elllipt1}. Furthermore note that $\lambda$ is of $C^{2,\alpha}$ regularity up to the boundary from \eqref{eq-Vstatx} since $\ric_g$ is $C^{0,\alpha}$ up the boundary. For the same reason as the proof of Theorem \ref{thm-main1}, we get also see that $\lambda\to0$ at infinity. That is, $\lambda\in C^{2,\alpha}_{-d}(M)$ for some $d\in(0,1]$. Now we can write \eqref{eq-elllipt1} as
\begin{equation}\label{eq-elllipt2}
	-\left(\Delta_g-n\right)\lambda=F,
\end{equation}
for some $F\in C^{2,\alpha}_{\tau}(M)\subset C^{2,\alpha}_{-d}(M)$. Since $-(\Delta_g+n):C^{2,\alpha}_{-s}\to C^{0,\alpha}_{-s}$ equipped with Dirichlet boundary conditions is an isomorphism for all $s\in[d,\tau]$, we have $\lambda\in C^{2,\alpha}_{\tau}$. That is, $V=f+\lambda$ is the required V-static potential satisfying $(II)$.

The reverse implication, $(II)\implies (I)$, follows by again defining $\mathcal H$ with respect to $f=V$, the given V-static potential. Then from Theorem \ref{thm-regham2}, we have
\begin{equation}\label{eq-DH1}
	D_g\mathcal H[h]=\int_{\p M}f\left( \nabla^jh_{ij}-\nabla_i(\trace_g(h)) \right)-\left( h_{ij}\nabla^jf-\trace_g(h)\nabla_if \right) \nu^idS_g
\end{equation}
for all $h\in T_g\widetilde{\mathcal R}$.

To show $D_g\mathcal H[h]=0$, we first recall the linearisation of the mean curvature. In its most convenient form for us, it can be expressed as (see, for example, Lemma 5.1 of \cite{HJ22} or Proposition 3.1 of \cite{andersonkhuri2013})
\begin{equation*}
	-2D_gH[h]=\left(\nabla^j(h_{ij})-\nabla_i(\trace_g(h))\right)\nu^i+K_{AB}h^{AB}+\nabla^A(h_{iA}\nu^i),
\end{equation*}
where $A,B$ denotes indices for $\p M$ and $K$ is the second fundamental form of $\p M$. We then note that the remaining terms in \eqref{eq-DH1} can be expressed as
\begin{equation}
	\left( h_{ij}\nabla^jf-\trace_g(h)\nabla_if \right) \nu^i=\nu^ih_{iA}\nabla^Af-g^{AB}h_{AB}\nabla_i(f)\nu^i
\end{equation}
so after an integration by parts we have
\begin{equation}\label{eq-DH2}
	D_g\mathcal H[h]=\int_{\p M}-2fD_gH[h]-fK_{AB}h^{AB}+g^{AB}h_{AB}\nabla_i(f)\nu^idS_g.
\end{equation}
That is, for all $h\in T_g\mathcal C$ we have
\begin{equation*}
	D_g\m_{VR}[h]=D_g\mathcal H[h]=0,
\end{equation*}
completing the proof.

\end{proof}

We conclude with some remarks.

\begin{rem}\label{rem-Penrose}
	For metrics on $M=\mathbb R^3\setminus \overline{B_1(0)}$ where $\overline{B_1(0)}$ is the closed unit ball, asymptotic to the standard hyperbolic metric at a rate $\tau\in(2,3)$, Brendle and Chodosh proved a (renormalised-) volume comparison theorem \cite{BrendleChodosh2014}. Namely, letting $(M,\gamma_m)$ be the AdS--Schwarzschild metric of mass $m>0$, they proved that among all metrics $g$ with the same asymptotics and boundary metric and mean curvature matching the minimal surface boundary in the AdS--Schwarzschild manifold, the renormalised volume of $(M,g)$ is strictly larger than that of $(M,\gamma_m)$ unless $g=\gamma_m$.
	
	Under these decay conditions, the renormalised volume is exactly the volume-renormalised mass (the ADM boundary integral vanishes), so this result can be understood as a Riemannian Penrose inequality for the volume-renormalised mass under stronger decay assumptions. That is, the AdS--Schwarzschild manifold minimises the volume-renormalised mass among all metrics with the same asymptotic decay rate, and minimal surface boundary of the same area. Note that AdS--Schwarzschild metrics are V-static, for if they were not then by the same argument as Corollary \ref{cor-CEM} we would be able to perform a local perturbation that decreases the volume-renormalised mass, which would contract the result of Brendle and Chodosh.
	
	Combined with Theorem \ref{thm-main2}, this suggests that the volume-renormalised mass should satisfy a Riemannian Penrose inequality.
\end{rem}
	
\begin{rem}
	Throughout this article we have tried to emphasise the analogy with the ADM mass and static asymptotically flat metrics. It seems likely that the volume-renormalised mass and V-static potentials on asymptotically hyperbolic manifolds share many other analogous properties, which would be interesting to pursue further.
\end{rem}

\section*{Acknowledgements}
This work is partially supported by Stiftelsen G.S. Magnusons fond grant no. MG2023-0060.

\setstretch{1.08} %ADJUST

\bibliographystyle{abbrvurl}
\bibliography{Refs}

\end{document}